\newtheorem{proposition}{Предложение}
\newtheorem{theorem}{Теорема}
\newtheorem*{theorem*}{Теорема}
\newtheorem{lemma}{Лемма}
\newtheorem*{lemma*}{Лемма}
\newtheorem{cor}{Следствие}
\theoremstyle{definition}
\newtheorem{example}{Пример}
\newtheorem{problem}{Проблема}
\theoremstyle{remark}
\newtheorem*{note*}{Замечание}
  \def\R{\mathbb R}
\def\N{\mathbb N}
\def\om{\omega}
\def\Om{\Omega}
\def\al{\alpha}
\def\ga{\gamma}
\def\la{\lambda}
\def\ph{\varphi}
\def\de{\delta}
\def\ka{\kappa}
\def\nfs/{NFS}
\def\cdp/{CDP}
\def\cdpz/{CDP${}_0$}
\def\cN{\mathcal{N}}
\def\cL{\mathcal{L}}
\def\ds#1{\D(#1)}
\def\cl#1{\overline{#1}}
\def\clx#1#2{\overline{#2}^{\,#1}}
\def\bt{\operatorname{\beta}}
\def\Tau{{\mathcal T}}
\def\sset#1{\{#1\}}
\def\set#1{\bbset#1\eeset}
\def\bbset#1:#2\eeset{\{#1\,:\,#2\}}
\def\bbsett#1:#2\eesett{\{#1\,:\,\text{#2}\}}
\def\ibbset#1:#2\ieeset{(#1)_{#2}}
\def\cM{{\mathcal M}}
\def\tp{\Tau}
\def\cP{{\mathcal P}}
\def\cB{{\mathcal B}}
\def\cB{{\mathcal B}}
\def\cU{{\mathcal U}}
\def\cR{{\mathcal R}}
\def\cV{{\mathcal V}}
\def\cN{{\mathcal N}}
\def\diag{\mathop{\bigtriangleup}}
\newcommand\restrA[2]{{% we make the whole thing an ordinary symbol
  \left.\kern-\nulldelimiterspace % automatically resize the bar with \right
  #1 % the function
  \vphantom{\big|} % pretend it's a little taller at normal size
  \right|_{#2} % this is the delimiter
  }}
\newcommand\restrB[2]{\ensuremath{\left.#1\right|_{#2}}}
\def\restr#1#2{\restrB{#1}{#2}}
\def\pwr#1_#2{#1^{[#2]}}
\def\term#1{{\it #1}}
\def\dddm#1(#2){N_{#1}(#2)}
\def\dddb#1(#2){B_{#1}(#2)}
\def\et(#1){ (#1)}
\def\bitem#1,#2.{ $#2\nrightarrow #1$:\ }
\def\oo#1/{$O_{#1}$}
\def\ep{\varepsilon}
\def\gd/{$G_\delta$}
\def\F{\Phi}
\def\rarr{\Rightarrow}
\def\diag{\DD}
\def\si{\sigma}
\def\ext#1{{\mathcal E}(#1)}
\def\af#1{{\mathcal A}(#1)}
\def\afb#1{{\mathcal A}_1(#1)}
\def\nm#1{\lVert #1 \rVert_{\infty}}
\def\nmc{\nm{\cdot}}
\def\csr#1{{\mathcal R}_{#1}}
\def\cz{$cz$}
\def\nwcz{nw_{cz}}
\def\ds{d_s}
\def\hds{hd_s}
\def\Si{\Sigma}
\def\:{\colon}
\def\diag{\mathop{\bigtriangleup}}
\def\leq{\leqslant}
\def\geq{\geqslant}
\def\cc{{\mathfrak{c}}}
\def\dc{D_{\cc}}
\def\dcc{D_{\cc,\om}}
\def\kc{K_{\cc}}
\begin{document}
%\maketitle

\begin{frontmatter}

\title{Вес выпуклых компактов и их границ}
%\author{Evgenii Reznichenko} 
\author{Евгений Резниченко}

\ead{erezn@inbox.ru}

\address{Department of General Topology and Geometry, Mechanics and  Mathematics Faculty, M.~V.~Lomonosov Moscow State University, Leninskie Gory 1, Moscow, 199991 Russia}

\begin{abstract}
Доказано, что если некоторая граница $B$ выпуклого компактного подмножества $X$ локально выпуклого линейного пространства имеет счетную сеть, то выпуклый компакт $X$ метризуем. Если граница $B$ линделефовово $\Si$-пространство, то сетевой вес $nw(B)$ границы $K$  совпадает с весом $w(K)$ выпуклого компакта $K$.
\end{abstract}
\begin{keyword}
выпуклые компакты
\sep
крайние точки выпуклого компакта 
\sep
границы выпуклого компакта
\sep
вес пространства
\sep
сетевой вес пространства
\end{keyword}
\end{frontmatter}

\def\Q{\mathbb Q}
\def\Z{\mathbb Z}

\section{Введение}
\label{sec:intro}
%%% begin file('ru//sec/intro.tex') %%%

Пусть $K$ выпуклое компактное подмножество некоторого локально выпуклого пространства, $\ext K$ множество крайних точек выпуклого компакта $K$. 
В \cite{Reznichenko1990} доказано, что вес $w(\ext K)$ крайних точек совпадает с сетевым весом $nw(\ext K)$ и  получены условия, при которых вес $w(\ext K)$ крайних точек совпадает с весом $w(K)$  выпуклого компакта. Например, когда $\ext K$ линделефово пространство или $K$ является симплексом Шоке. Но ответ на следующий вопрос до сих пор неизвестен.

\begin{problem}[А.В. Архангелький] \label{q:1}
Верно ли, что вес $w(K)$ выпуклого компакта $K$ совпадает с весом $w(\ext K)$ его крайних точек $\ext K$?
\end{problem}

%Выпуклый компакт $K$ называется \term{стандартным}, если крайние точки $\ext K$ измеримы относительно любой радоновской меры на $K$ и $\mu(\ext K)=1$ для любой максимальной меры на $K$ \cite{Spurny2010}. В  \cite{Spurny2010} доказано, что если $K$ стандартный выпуклый компакт, то $w(K)=w(\ext K)$ \cite{Spurny2010}[Theorem 1.2]. Несложно заметить, что выпуклый компакт с линделефовыми крайними точками является стандартным (Предложение \ref{???}), так что получаем еще одно доказательство равенства $w(K)=w(\ext K)$ в случае когда крайние точки $\ext K$ линделефовы.

Пусть $B\subset K$ некоторая граница выпуклого компакта $K$. В работах \cite{Spurny2010,KalendaSpurny2009} исследуется, как связаны вес $w(K)$ выпуклого компакта, вес $w(B)$ и сетевой вес $nw(B)$ границы $B$ и каким образом результаты о крайних точках  можно перенести на  границы.

В \cite[Example 5.3]{KalendaSpurny2009} построен выпуклый компакт $K$ и граница $B\subset K$, для которых $nw(B)<w(B)=w(K)$. В \cite{KalendaSpurny2009} ставится вопрос: верно ли что вес $w(B)$ границы $B$ выпуклого компакта $K$ совпадает с весом $w(K)$ выпуклого компакта $K$? В этой заметки получен отрицательный ответ на этот вопрос, построен выпуклый компакт $K$ и граница $B\subset K$, так что $w(B)<w(K)$ (Пример \ref{e:eq:1}).
В \cite[Corollary 1.4]{Spurny2010} И.Спурны доказал, что если граница $B$ линделефова, то $w(K)=w(B)$. В \cite[Question 1.5]{Spurny2010} ставится следующий вопрос.

\begin{problem}[И.Спурны] \label{q:2}
Пусть $K$ выпуклый компакт,  $B$ граница в $K$ и $B$ линделефово. 
Верно ли, что $w(K)=nw(B)$?
\end{problem}
Если $B=\ext K$, то ответ на этот вопрос положителен \cite[Theorem 3.3]{Reznichenko1990}.
В этой работе дается частичный ответ на вопрос И.Спурны. 
Пусть $\tau$ кардинал. Определим класс пространств $\csr\tau$: $X\in\csr\tau$ если существует семейство подмножеств $\cN$ множества $X$, так что для каждого нуль множества $U\subset X$ существует $\cL\subset \cN$, $|\cL|\leq \om$, такое что $\bigcup\cL=U$.
Тогда выполняются следующие условия (см.\ теоремы \ref{t:asd:1} и \ref{t:wccs:2}):
\begin{enumerate}
\item
если $K$ выпуклый компакт, $B\subset K$ граница $K$ и $B\in \csr\tau$, то $w(K)\leq\tau$;
\item
если $X$ линделефово пространство и $w(X)\leq\tau$, то $X\in\csr\tau$;
\item
если $Y\in\csr\tau$ и $X\subset Y$ $z$-вложено в $Y$, то $X\in\csr\tau$;
\item
если $Y\in\csr\tau$ и $X$ является непрерывным образом $Y$, то $X\in\csr\tau$;
\item
если  $X$ является линделефовым $\Si$-пространством и $nw(X)\leq\tau$, то $X\in\csr\tau$;
\item
если  $X$ плотное подмножество произведения $\prod_{\al<\tau} X_\al$, где $X_\al$ метризуемое сепарабельное пространство для $\al<\tau$, то $X\in\csr\tau$.
\end{enumerate}
Получаем, что если граница $B$ является линделефовым $\Si$-пространством, то $w(K)=nw(B)$ (теорема \ref{t:wccs:3}). 
В частности, если $K$ выпуклый компакт и $B\subset K$ граница со счетной сетью, то выпуклый компакт $K$ метризуем (теорема \ref{t:wccs:2+1}). 
В \cite{Haydon1976} Р.~Хейдон доказал это утверждение в случае, когда $B$ является множеством $\ext K$ крайних точек выпуклого компакта $K$. 
Из (1) и (6) вытекает, что если граница $B$ гомеоморфна $\R^\tau$, то $w(K)\leq\tau$. Это утверждение интересно и неожиданно и для случая, когда $B=\ext K$.

Раздел \ref{sec:defs} содержит определения, обозначения и предварительные сведения.
В разделе \ref{sec:asd} изучается, когда в подпространствах функциональных пространств есть секвенциально плотные подпространства фиксированной мощности.
В разделе \ref{sec:wcps} изучается связь между весом выпуклых компактов и пространством вероятностных мер над границами.
В разделе \ref{sec:wccs} изучается связь между весом выпуклых компактов и пространством аффинных функций над выпуклым компактом.
В разделе \ref{sec:eq} строятся примеры и формулируются вопросы.

%%% end file('ru//sec/intro.tex') %%%

\section{Определения, обозначения и предварительные сведения}
\label{sec:defs}
%%% begin file('ru//sec/defs.tex') %%%

Натуральные числа обозначаются как $\N$, $\om=\sset 0 \cup \N$ --- первый счетный ординал, $\om_1$ --- первый несчетный ординал. Под пространством мы подразумеваем тихоновское пространство. 
Пусть $X$ пространство.

Семейство $\cN$ подмножеств пространства $X$  называется \term{сетью} пространства $X$ если для любого открытого множества $U\subset X$ существует подсемейство $\cM\subset \cN$, такое что $U=\bigcup\cM$. Семейство $\cN$ называется \term{базой} пространства $X$ если $\cN$ состоит из открытых множеств и является сетью пространства $X$. 

Подмножество $S\subset X$ называется \term{секвенциально плотным} в $X$, если для любого $x\in X$ существует последовательность $(x_n)_n\subset S$, сходящаяся к точке $x$.

Пространство $X$ называется пространством \term{Фреше--Урысона}, если для каждого $A\subset X$ и $x\in \cl X$ существует последовательность $(x_n)_n\subset A$, сходящаяся к $x$.

Пусть $f\: X\to Y$ отображение пространств. Отображение $f$ называется \term{уплотнением}, если $f$ непрерывная биекция. Отображение $f$ называется \term{совершенным}, если $f$ непрерывное замкнутое отображение, такое что $f^{-1}(x)$ компактно для всех $x\in Y$.

Определим некоторые кардинальнве инварианты:
\begin{align*}
iw(X) &= \min \{\ \tau: X \text{ уплотняется на пространство с весом не более } \tau \} + \om,
\\
w(X) &= \min \{\ |\cB|: \cB \text{ является базой пространства } X \} + \om,
\\
nw(X) &= \min \{\ |\cN|: \cN \text{ является сетью пространства } X \}  + \om,
\\
d(X) &= \min \{\ |M|: M\subset X = \cl M \}  + \om,
\\
hd(X) &= \sup \{\ d(M): M\subset X\},
\\
l(X) &= \min \{\ \tau: \text{ для любого открытого покрытия }\ga\text{ пространства }X
\\
&\qquad\qquad\quad
\text{ существует }\la\subset\ga,\text{ такое что }|\la|\leq\tau\text{ и }\bigcup\la=X\ \},
\\
hl(X) &= \sup \{\ l(M): M\subset X\},
\\
hl^*(X) &= \sup \{\ hl(X^n): n\in\om\},
\\
wl(X) &= \min \{\ \tau: \text{ для любого открытого покрытия }\ga\text{ пространства }X
\\
&\qquad\qquad\quad
\text{ существует }\la\subset\ga,\text{ такое что }|\la|\leq\tau\text{ и }\cl{\bigcup\la}=X\ \}.
\end{align*}
Тогда 
$iw(X)$ --- \term{$i$-вес},
$w(X)$ --- \term{вес},
$nw(X)$ --- \term{сетевой вес},
$d(X)$ --- \term{плотность},
$hd(X)$ --- \term{наследственная плотность},
$l(X)$ --- \term{число Линделефа},
$hl(X)$ --- \term{наследственное число Линделефа},
$wl(X)$ --- \term{слабое число Линделефа}
пространства $X$. 

Пространства со счетной сетью (т.е.\ пространства со счетным сетевым весом) так же называются \term{космическими пространствами} (cosmic space). Пространства со счетной плотностью называются \term{сепарабельными} пространствами.

Пространство $X$ называется \term{линделефовым пространством}, если $l(X)\leq\om$. 
Пространство $X$ является \term{линделефовым $p$-пространством}, если $X$ можно совершенно отобразить на пространство со счетной базой. Непрерывные образы линделефовых $p$-пространств называют \term{линделефовыми $\Si$-пространствами}.

Линделефовы $\Si$-пространства также называют \term{счетно определяемыми} (countably determined) или \term{$K$-счетно определяемыми} ($K$-countably determined).

Множество $F\subset X$ называется \term{нуль} множеством пространства $X$, если $F=f^{-1}(0)$  для некоторой непрерывной функции на $X$. Дополнения до нуль множеств называется \term{ко-нуль} множествами.

Пространство $X$  называется \term{совершенно $\ka$-нормальным}, если для любого открытого $U\subset X$ замыкание $\cl U$ является нуль множеством.

Множество $Y\subset X$ \term{$z$-вложено} в $X$, если для любого нуль множества $Q$ множества $Y$ существует  нуль множество $F$ пространства $X$, такое что $Q=F\cap Y$.

Непрерывные функции на $X$ обозначим $C(X)$. Пусть $B\subset X$. Обозначим 
\[
C(X|B) = \set{ \pi_B(f) : f\in C(X) } \subset C(B),
\]
где $\pi_B(f)=\restr fB$.
Семейство функций вида 
\[
\de_x\: C(X) \to \R,\ f\mapsto f(x)
\]
для $x\in B$ определяют топологию $\tau_p(B)$ множества $C(X)$ с топологией поточечной сходимости на множестве $B$.
Если $M\subset C(X)$, то будем обозначать через $(M,\tau_p(B))$  подпространство $M$  топологического пространства $(C(X),\tau_p(B))$.
В общем случае пространство $(M,\tau_p(B))$ неотделимо, но мы будем рассматривать такие $M$ и $B$, что пространство $(M,\tau_p(B))$ отделимо.
Пространство $(C(X),\tau_p(X))$ с топологией поточечной сходимости  на всем множестве $X$ обозначим $C_p(X)$. 
Обозначим $C_p(X|B)$ множество $C(X|B)$ с топологией, наследуемой из пространства $C_p(X)$.
Если пространство $(M,\tau_p(B))$ отделимо, то отображение $\pi_B$ гомеоморфно отображает $(M,\tau_p(B))$ на $\pi_B(M)\subset C_p(X|B) \subset C_p(B)$.

Если $S\subset \R^X$, то $S$ \term{разделяет точки} $X$, если для различных $x,y\in X$ существует $f\in S$, такое что $f(x)\neq f(y)$.

\begin{proposition}[\cite{Arhangelskii1989CpBook}]\label{p:defs:1}
Пусть $X$ пространство.
\begin{enumerate}
\item
$d(X)\leq nw(X) \leq w(X)$ и $iw(X)\leq nw(X)$;
\item
если $X$ метризуемое пространство, то $d(X)=nw(X)=w(X)$;
\item
если $Y$ пространство и $X$ непрерывный образ $Y$, то $nw(X)\leq nw(Y)$;
\item
$nw(X)=nw(C_p(X))$;
\item
$d(X)=iw(C_p(X))$, 
$iw(X)=d(C_p(X))$;
\item
$nw(X)=iw(X)$ если $X$ линделефово $\Si$-пространство;
\item
$w(X)=nw(X)=iw(X)$ если $X$ линделефово $p$-пространство, в частности, если $X$ компактно;
\item
$hd(C_p(X))=hl^*(X)$.
\end{enumerate}
\end{proposition}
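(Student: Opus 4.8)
The plan is to treat Предложение~\ref{p:defs:1} as a compendium of standard facts of the theory of cardinal invariants and of $C_p$-spaces, and to split the work into the elementary monotonicity inequalities, which I would check directly, and the substantive $C_p$-theoretic identities, which I would reduce to \cite{Arhangelskii1989CpBook}.

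First I would dispatch (1)--(3). Every base is a network, so $nw(X)\leq w(X)$. Choosing one point in each nonempty member of a network of minimal cardinality produces a set that meets every nonempty open set, hence a dense set, so $d(X)\leq nw(X)$. For $iw(X)\leq nw(X)$ I would fix a network $\cN$ of size $\tau=nw(X)$ and select, for each ordered pair $(N,M)\in\cN^2$ admitting a continuous $\ph\colon X\to[0,1]$ with $\restr \ph N\equiv 0$ and $\restr \ph M\equiv 1$, one such function; by complete regularity of $X$ the resulting family of at most $\tau$ functions separates points, so its diagonal is a condensation of $X$ onto a subspace of $\R^\tau$, a space of weight $\leq\tau$. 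Item (2) follows from the classical fact that a metrizable space has a base of cardinality $d(X)$ --- for each $n$ take a maximal $1/n$-separated set $D_n$ and the $1/n$-balls centred at the points of $\bigcup_n D_n$ --- together with (1), which then forces $d(X)=nw(X)=w(X)$. For (3), if $f\colon Y\to X$ is a continuous surjection and $\cN$ is a network of $Y$, then $\{f(N):N\in\cN\}$ is a network of $X$, since for open $U\subset X$ the preimage $f^{-1}(U)$ is a union of members of $\cN$ and $f$ carries that union onto $U$.

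Next I would prove (4), Arhangelskii's identity $nw(X)=nw(C_p(X))$, from the single inequality $nw(C_p(Y))\leq nw(Y)$ valid for every $Y$. Given a network of $Y$ of size $\tau=nw(Y)$, I would check that the family of finite intersections of the sets $\{g\in C(Y):g(F)\subset(p,q)\}$, with $F$ ranging over the network and $p<q$ over the rationals, is a network of $C_p(Y)$ of cardinality $\tau$: inside any basic neighbourhood of a given $g$ one shrinks each coordinate constraint to a rational subinterval and then to a network element on which $g$ already lands inside that subinterval. Applying this with $Y=X$ gives $nw(C_p(X))\leq nw(X)$; applying it with $Y=C_p(X)$, together with the canonical evaluation embedding of $X$ into $C_p(C_p(X))$ and monotonicity of $nw$ on subspaces, gives $nw(X)\leq nw(C_p(C_p(X)))\leq nw(C_p(X))$, hence equality. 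The remaining items (5)--(8) --- the dual identities $iw(X)=d(C_p(X))$ and $d(X)=iw(C_p(X))$; the coincidences $nw=iw$ on Lindelof $\Si$-spaces and $w=nw=iw$ on Lindelof $p$-spaces (the compact subcase being immediate, since a condensation of a compactum is a homeomorphism, so $w(X)\leq iw(X)$, and the general case following from (6) together with the embedding of a Lindelof $p$-space as a closed subset of a product of a second-countable space with a compactum); and the identity $hd(C_p(X))=hl^*(X)$ --- I would simply cite \cite{Arhangelskii1989CpBook}, where all of them are established.

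The hard part will be (6) and (8). The coincidence $nw=iw$ on Lindelof $\Si$-spaces relies on the structure theory of such spaces, namely their presentation as continuous images of perfect preimages of separable metrizable spaces, and the identity $hd(C_p(X))=hl^*(X)$ requires the two-sided duality between hereditary density of $C_p(X)$ and the hereditary Lindelof degree of all finite powers of $X$. Since these are thoroughly documented, I would not reprove them; the only real obstacle to a fully self-contained account is transcribing those two arguments from the monograph.
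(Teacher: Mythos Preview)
Your proposal is correct. In the paper this proposition carries no proof whatsoever --- it is simply stated and attributed to \cite{Arhangelskii1989CpBook} --- so your plan of actually sketching the elementary items (1)--(4) and citing the monograph for (5)--(8) already goes beyond what the paper does. One minor remark on your parenthetical for (7): the bare closed embedding of a Lindel\"of $p$-space into $M\times K$ (with $K$ compact) does not by itself bound $w(X)$, since the compact factor one obtains, e.g.\ $\beta X$, may have arbitrarily large weight; the standard fix is to take the diagonal of the perfect map with the condensation coming from (6), which embeds $X$ homeomorphically into a product of weight at most $nw(X)$. Since you defer (7) to the citation anyway, this is harmless.
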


Счетно-аддитивная мера $\mu$ на $\si$-алгебре $\cB(X)$ борелевских множеств пространства $X$ называется \term{борелевской} мерой.
Обозначим
\begin{align*}
\mu^*(A)  &= \inf\set{\mu(B): A\subset B\in \cB(X)},
&
\mu_*(A)  &= \sup\set{\mu(B): A\supset B\in \cB(X)}
\end{align*}
\term{внешнюю} и \term{внутреннюю} меру множества $A\subset X$.
Множество вероятностных борелевских мер на пространстве $X$ обозначим через $P_b(X)$.
Пусть $\mu\in P_b(X)$.
Вероятностная мера $\mu$ называется \term{радоновской}, если для всякого борелевского множества $B\subset X$ и для всякого $\ep>0$ существует компактное $K\subset B$, такое что $\mu(B\setminus K)<\ep$. 
Множество вероятностных радоновских мер на пространстве $X$ обозначим через $P(X)$.
Мера $\mu$ называется \term{$\tau$-аддитивной}, если
\[
\mu(\bigcup\cU) = \sup \set{ \mu(\bigcup\cV) : \cV\subset \cU,\ |\cV|<\om }
\]
для любого семейства $\cU$ открытых подмножеств пространства $X$. 
Множество вероятностных $\tau$-аддитивных мер на пространстве $X$ обозначим через $P_\tau(X)$. Отметим, что $P(X)\subset P_\tau(X)$. 

Пусть $Y$ пространство и $X\subset Y$.  Множество $P_b(X)$ естественным образом вложено в $P_b(Y)$, мере $\mu\in P_b(X)$ соответствует мера $\tilde\mu\in P_b(Y)$, такая что $\tilde\mu(B)=\mu(B\cap Y)$ для $B\in\cB(Y)$.  Будем считать, что $P_b(X)\subset P_b(Y)$. Если $Y$ компактно, то $P(X)\subset P_\tau(X)\subset P(Y)$. 
Положим $P_\si(X)=\{\mu\in P(\bt X): \mu(C)=0$ для любого компактного множества $C\subset \bt X\setminus X$ типа $G_\de\}$. Меры из $P_\si(X)$ соответствуют вероятностным $\si$-аддитивным мерам на $\si$-алгебре бэровских множеств пространства $X$.

Обозначим через $C^*(X)$ множество непрерывных ограниченных функций на $X$.
Семейство функции вида 
\[
\ph_f\: P_\tau(X) \to \R,\ \mu \mapsto \mu(f) = \int_X f \mathrm{d}\,\mu
\]
для $f\in C^*(X)$ определяют слабую топология пространства $P_\tau(X)$.

\begin{proposition}[\cite{Banakh1995ru}]\label{p:defs:2}
Пусть $Y$ компактное пространство и $X\subset Y$.
Тогда $w(X)=w(P(X))=w(P_\tau(X))$ и 
\begin{align*}
P(X)  &= \set{\mu \in P(Y): \mu_*(X)=1},
&
P_\tau(X)  &= \set{\mu \in P(Y): \mu^*(X)=1}.
\end{align*}
Вложение $P(X)$ и $P_\tau(X)$ в $P(Y)$ является топологическим вложением.
\end{proposition}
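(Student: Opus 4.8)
\emph{Proof sketch (plan).}
The plan is to treat the three assertions separately, the topological‑embedding claim being the substantive one. Write $\iota\colon\mu\mapsto\tilde\mu$ for the natural map into $P(Y)$, where $\tilde\mu(B)=\mu(B\cap X)$ for $B\in\cB(Y)$; since $\cB(X)=\{B\cap X:B\in\cB(Y)\}$ this map is injective, and by the inclusion $P(X)\subset P_\tau(X)\subset P(Y)$ recalled above it carries $P(X)$ and $P_\tau(X)$ into $P(Y)$. I would first establish the two identities. If $\mu\in P_\tau(X)$ and $B\in\cB(Y)$ with $B\subset Y\setminus X$, then $\tilde\mu(B)=\mu(\es)=0$, hence $\tilde\mu_*(Y\setminus X)=0$, i.e. $\tilde\mu^*(X)=1$; if moreover $\mu\in P(X)$, Radonness supplies compacta $C\subset X$ with $\tilde\mu(C)=\mu(C)$ arbitrarily close to $1$, so $\tilde\mu_*(X)=1$. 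Conversely, given $\nu\in P(Y)$ with $\nu^*(X)=1$, the prescription $\mu(B\cap X)\eqdef\nu(B)$ is well defined on $\cB(X)$ because $B_1\cap X=B_2\cap X$ forces $B_1\triangle B_2\subset Y\setminus X$ and hence $\nu(B_1\triangle B_2)\leq\nu_*(Y\setminus X)=0$; one then checks $\mu\in P_b(X)$ with $\tilde\mu=\nu$, that $\mu$ is $\tau$‑additive (lift an open cover $\cU$ of $X$ to open sets $V_U$ in $Y$ with $V_U\cap X=U$ and apply $\tau$‑additivity of the Radon measure $\nu$ to $\{V_U\}$), and — if in addition $\nu_*(X)=1$ — that $\mu$ is Radon (intersect an inner compact approximation $K\subset B$ of $\nu$ with a compact $K'\subset X$ having $\nu(K')$ close to $1$, so that $K\cap K'$ is a compact subset of $B\cap X$ of almost full measure). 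This yields both identities.

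For the embedding, continuity of $\iota$ is immediate from $\ph_g\circ\iota=\ph_{\restr{g}{X}}$ for $g\in C^*(Y)$. To get continuity of $\iota^{-1}$ I would fix $f\in C^*(X)$, normalize to $0\leq f\leq c$, and show $\ph_f\circ\iota^{-1}$ is continuous on $M\eqdef\iota(P_\tau(X))=\{\nu\in P(Y):\nu^*(X)=1\}$ with the topology induced from $P(Y)$. Put $U_t=Y\setminus\operatorname{cl}_Y\{f\leq t\}$ (open in $Y$) and $G_t=\operatorname{cl}_Y\{f\geq t\}$ (closed in $Y$); from $\operatorname{cl}_Y A\cap X=\operatorname{cl}_X A$ one gets $U_t\cap X=\{f>t\}$ and $G_t\cap X=\{f\geq t\}$, so for $\nu=\tilde\mu\in M$ the layer‑cake formula gives
\[
\ph_f(\mu)=\int_0^c\mu(\{f>t\})\,dt=\int_0^c\nu(U_t)\,dt=\int_0^c\nu(G_t)\,dt,
\]
where the last equality holds because $\{f>t\}$ and $\{f\geq t\}$ differ for only countably many $t$. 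On all of $P(Y)$ the map $\nu\mapsto\nu(U_t)$ is lower semicontinuous and $\nu\mapsto\nu(G_t)$ upper semicontinuous (inner, resp. outer, regularity of Radon measures writes them as a supremum, resp. infimum, of $\nu\mapsto\nu(h)$, $h\in C(Y)$); since $t\mapsto\nu(U_t)$ and $t\mapsto\nu(G_t)$ are monotone, $L(\nu)\eqdef\int_0^c\nu(U_t)\,dt$ is a supremum of lower Riemann sums, hence lower semicontinuous on $P(Y)$, and $R(\nu)\eqdef\int_0^c\nu(G_t)\,dt$ is an infimum of upper Riemann sums, hence upper semicontinuous. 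As $L=R=\ph_f\circ\iota^{-1}$ on $M$, squeezing along an arbitrary net in $M$ forces $\ph_f\circ\iota^{-1}$ to be continuous there; thus $\iota$, and its restriction to $P(X)$, are topological embeddings. I expect this squeeze to be the one genuinely delicate point: an $f\in C^*(X)$ need not extend continuously over $Y$, so $\ph_f$ cannot simply be identified with some $\ph_g$, $g\in C(Y)$, and one must instead trap it between matching semicontinuous functionals on $P(Y)$.

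For the weights (the case of finite $X$ being trivial), the Dirac map $\de\colon x\mapsto\de_x$ embeds $X$ homeomorphically into $P(X)$, giving $w(X)\leq w(P(X))\leq w(P_\tau(X))$. For the opposite inequality I would take $Y$ to be the closure of $X$ in $[0,1]^{w(X)}$, a compactification with $w(Y)\leq w(X)$; a Stone--Weierstrass argument provides a norm‑dense subset of $C(Y)$ of cardinality $w(Y)$, so $P(Y)$, sitting in the dual unit ball of $C(Y)$ with the weak${}^*$ topology, embeds into a product of $w(Y)$ real intervals, whence $w(P(Y))\leq w(Y)\leq w(X)$. By the embedding just established, $P_\tau(X)$ and $P(X)$ embed topologically into this $P(Y)$, so $w(P_\tau(X))\leq w(P(Y))\leq w(X)$, and the three weights coincide.
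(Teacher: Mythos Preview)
The paper does not prove this proposition at all: it is stated in the preliminaries section and attributed to Banakh \cite{Banakh1995ru}, with no argument given. So there is no ``paper's own proof'' to compare your sketch against.

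Your plan is sound as a self-contained proof. The identification of $P(X)$ and $P_\tau(X)$ inside $P(Y)$ via inner and outer measure is handled correctly; the well-definedness argument for the restriction measure and the Radon/$\tau$-additive verifications are standard and you have identified the right ingredients. For the embedding, your layer-cake/squeeze argument is the genuinely interesting part and it works: the key observations that $\nu\mapsto\nu(U_t)$ is lower semicontinuous, $\nu\mapsto\nu(G_t)$ is upper semicontinuous, and that the monotonicity in $t$ lets you pass to Riemann sums are all correct, and the final squeeze $L\leq R$ with $L=R$ on $M$ yields continuity of $\ph_f\circ\iota^{-1}$ on $M$. (Note that $L\leq R$ need not hold on all of $P(Y)$, only after one observes that every $\nu\in M$ is supported on $\cl_Y X$, where $U_t\subset G_t$; your squeeze is carried out on $M$, so this is fine.) The weight argument via a compactification of weight $w(X)$ and Stone--Weierstrass is the standard route.

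An alternative, perhaps closer in spirit to how such results are often presented, is to bypass the layer-cake step by extending $f\in C^*(X)$ to a lower semicontinuous $f_-$ and an upper semicontinuous $f_+$ on $Y$ with $f_-\leq f_+$ and $f_-|_X=f_+|_X=f$, and then squeezing $\ph_f\circ\iota^{-1}$ between $\nu\mapsto\int f_-\,d\nu$ and $\nu\mapsto\int f_+\,d\nu$. This is essentially equivalent to what you do (your $L$ and $R$ are precisely these integrals), but it makes the role of semicontinuous envelopes more explicit. Either way, your outline is correct.
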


Компактные выпуклые подмножества линейных локально выпуклых пространств будем называть \term{выпуклыми компактами}.
Пусть $K$ выпуклый компакт. 

Точка $x\in K$ называется \term{крайней точкой}, если из того, что $y,z\in K$ и $x=\frac{y+z}2$ вытекает, что $x=y=z$.
Множество крайних точек выпуклого компакта $K$ обозначим как $\ext X$. Множество непрерывных аффинных функций на $K$ обозначим как $\af K$.
Пусть $\nmc$ есть $\sup$-норма на $\af K$, $\nm f=\sup_{x\in K} |f(x)|$ и $\afb K=\set{f\in \af K: \nm f \leq 1}$ --- единичный шар в $(\af K,\nmc)$. 
Для $M\subset \af K$ будем обозначать через $(M,\nmc)$ множество $M$ с метрикой (и, соответственно, с топологией), наследуемой из $(\af K,\nmc)$.

Множество $B\subset K$ называется \term{границей}, если максимум любой аффинной непрерывной функции $f\in \af K$ достигается в некоторой точке $b\in B$,  то есть $f(b)=\max_{x\in K}f(x)$.
Множество крайних точек $\ext K$ является границей в $K$.
Так как выпуклая оболочка границы $B$ плотна в $K$, то $(\af K,\tau_p(B))$ отделимое пространство и $(\af K,\tau_p(B))$ вкладывается в $C_p(K|B)\subset C_p(B)$ \cite{MoorsReznichenko2008}.

\begin{proposition}\label{p:defs:3}
Пусть $K$ выпуклый компакт и $B\subset K$ некоторая граница $K$.
% и $(f_n)_n\subset \afb K$.
\begin{enumerate}
%\item
%(лемма Симонс \cite[теорема 3]{Simons1972}, \cite[Следствие 3.75]{LukesMalyNetukaSpurny2010}) Тогда
%\[ \sup_{b\in B} \limsup_{n\to\infty} f_n(b) = \sup_{x\in K} \limsup_{n\to\infty} f_n(x). \]
\item
(теорема Рейнуотер--Симонс, \cite{Simons1972}) Если $(f_n)_n\subset \afb K$ сходится к $f\in \afb K$ в $(\af K,\tau_p(B))$, то $(f_n)_n$ сходится к $f$ в $(\af K,\tau_p(K))$.
Другими словами, сходящиеся последовательности в  $(\afb K,\tau_p(B))$ и $(\afb K,\tau_p(K))$  совпадают.
\item
(теорема Пфицнер, \cite{Pfitzner2010}) Если $Y\subset \afb K$ и $(Y,\tau_p(B))$ компактно, то $(Y,\tau_p(K))$ компактно. 
Другими словами, компактные подмножества в $(\afb K,\tau_p(B))$ и $(\afb K,\tau_p(K))$  совпадают.
\end{enumerate}
\end{proposition}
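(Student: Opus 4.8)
The plan is to reduce both statements to the classical Banach-space theorems of Rainwater--Simons \cite{Simons1972} and Pfitzner \cite{Pfitzner2010} by translating the convex-compact picture into the language of James boundaries. First I would work with the Banach space $V=(\af K,\nmc)$ and the evaluation map $e\colon K\to V^{*}$, $e(x)(f)=f(x)$. Since continuous affine functions separate the points of $K$, the map $e$ is an affine homeomorphism of $K$ onto a weak${}^{*}$-compact convex subset of $V^{*}$, and $\|e(x)\|=1$ for every $x\in K$. Using the identity $\nm f=\max\bigl(\max_{x\in K}f(x),\,-\min_{x\in K}f(x)\bigr)$ for $f\in\af K$ together with the defining property of a boundary, I would verify that $S=e(B)\cup(-e(B))$ is a James boundary of $V$: it lies in the closed unit ball of $V^{*}$, and every $f\in V$ attains its norm at a point of $S$ (apply the boundary property to $f$ if $\nm f=\max_K f$, and to $-f$ otherwise). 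Then I would record the topological dictionary on $\af K=V$: the topology $\tau_{p}(B)$ is exactly $\sigma(V,S)$, the topology of pointwise convergence on $S$, while $\tau_{p}(K)=\sigma(V,e(K))$ is a Hausdorff topology lying between $\tau_{p}(B)$ and the weak topology $\sigma(V,V^{*})$.

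With this dictionary, assertion (1) becomes the Rainwater--Simons theorem: a norm-bounded sequence $(f_{n})_{n}\subset\afb K$ that converges to $f\in\afb K$ in $\tau_{p}(B)=\sigma(V,S)$ converges pointwise on the James boundary $S$, hence converges to $f$ in $\sigma(V,V^{*})$, and therefore also in the coarser topology $\tau_{p}(K)$. Assertion (2) follows from Pfitzner's theorem: if $Y\subset\afb K$ and $(Y,\tau_{p}(B))$ is compact, then $Y$ is a norm-bounded, $\sigma(V,S)$-compact subset of $V$, hence weakly compact; since $\tau_{p}(K)$ is Hausdorff and coarser than $\sigma(V,V^{*})$, the identity map $(Y,\sigma(V,V^{*}))\to(Y,\tau_{p}(K))$ is a continuous bijection of a compact space onto a Hausdorff space, so a homeomorphism, and $(Y,\tau_{p}(K))$ is compact.

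I expect no real obstacle inside the reduction itself: all the depth is carried by the two cited theorems, Pfitzner's being the hard one (it resolves the boundary problem for weak compactness). The single step that genuinely uses the hypothesis on $B$ is the verification that $S=e(B)\cup(-e(B))$ is a James boundary of $\af K$, and this is immediate from the formula for $\nm f$ above and from $\af K$ being stable under $f\mapsto-f$. Everything else --- separation of the points of $K$ by $\af K$, affineness and weak${}^{*}$-continuity of $e$, and the comparisons between $\tau_{p}(B)$, $\tau_{p}(K)$ and $\sigma(V,V^{*})$ --- is routine.
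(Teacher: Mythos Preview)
The paper does not prove this proposition at all: it is stated with references to \cite{Simons1972} and \cite{Pfitzner2010} and used as a black box. Your reduction is exactly the standard way one extracts the convex-compact formulation from those Banach-space theorems, and it is correct: the key computation that $S=e(B)\cup(-e(B))$ is a James boundary of $(\af K,\nmc)$ is right (using $\nm f=\max(\max_{K}f,\max_{K}(-f))$ and the definition of a boundary), and the topology comparisons $\tau_p(B)=\sigma(V,S)\subset\tau_p(K)\subset\sigma(V,V^{*})$ with $\tau_p(K)$ Hausdorff are what is needed to transfer the conclusions back.
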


Пусть $\mu\in P(K)$. Точка $x\in K$ называется \term{барицентром} меры $\mu$, если $f(x)=\int_X f \mathrm{d}\,\mu$ для любого $f\in \af K$. Обозначим через $r_K(\mu)$ барицентр меры $\mu$ в выпуклом компакте $K$.
Обозначим через $P_{\max}(K)$ множество \term{максимальных} мер на выпуклом компакте $K$ (см.\ определение и свойства максимальных мер в главе 1, \$4 \cite{Alfsen1971}).

\begin{proposition}\label{p:defs:4}
Пусть $K$ выпуклый компакт.
\begin{enumerate}
\item
(теорема Мильмана, \cite[теорема 1.12.6]{BogachevSmolyanovSobolev2012})
если $C\subset K$ замкнуто и выпуклая оболочка $C$ плотна в $K$, то $\ext K\subset C$;
\item
(теорема Шоке--Бишоп--де Леу, \cite[теорема I.4.8]{Alfsen1971}) $p_K(P_{\max}(K))=K$;
\item
(теорема Бишопа--де Леу, \cite[следствие I.4.12.]{Alfsen1971})
если $\mu\in P_{\max}(K)$ и $F\subset K\setminus \ext K$ компакт типа $G_\de$, то $\mu(F)=0$;
\item
(\cite[предложение 2.38]{LukesMalyNetukaSpurny2010})
отображение $r_K\: P(K) \to K$ является непрерывным аффинным отображением;
\item
(\cite[следствие 3.60]{LukesMalyNetukaSpurny2010}) $P_\tau(\ext X)\subset P_{\max}(K)$.
\end{enumerate}
\end{proposition}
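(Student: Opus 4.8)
The plan is to recall that every one of the five items of Proposition~\ref{p:defs:4} is a classical result of Choquet theory for which the quoted references contain a complete proof; the role of the proposition is only to collect them in the exact form used later, so the ``proof'' reduces to pointing at the literature and checking that the measure--theoretic conventions fixed in the preceding paragraphs agree with those of the sources.

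For (1) I would reproduce the standard covering argument behind Milman's theorem. Since $K$ is compact, $C$ is compact; cover $C$ by finitely many open convex sets $U_1,\dots,U_n$ of prescribed small diameter and set $C_i=\cl{\operatorname{conv}(C\cap\cl{U_i})}\subseteq\cl{U_i}$. Each $C_i$ is a compact convex subset of $K$, hence so is $\operatorname{conv}(C_1\cup\dots\cup C_n)$, being a continuous image of a compact set; it contains $\operatorname{conv}(C)$ and is closed, so it equals $\cl{\operatorname{conv}(C)}=K$. Thus every $x\in\ext K$ is a finite convex combination $\sum t_ix_i$ with $x_i\in C_i$, and extremality forces $x=x_i\in\cl{U_i}$ for some $i$ with $t_i>0$, so that $C\cap\cl{U_i}\neq\varnothing$; letting the diameters shrink along a neighbourhood base shows $x\in\cl C=C$. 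This is exactly \cite[теорема 1.12.6]{BogachevSmolyanovSobolev2012}.

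Items (2) and (3) I would take from \cite[теорема I.4.8]{Alfsen1971} and \cite[следствие I.4.12]{Alfsen1971}: a Zorn's--lemma argument in the Choquet order produces above every $\delta_x$ a maximal measure, whose barycenter is still $x$, which gives (2); and if $\mu$ is maximal and $F\subseteq K\setminus\ext K$ is a $G_\de$ compact set --- in particular a Baire set, the form in which Bishop--de Leeuw is usually stated --- then $\mu(F)=0$, because on $K\setminus\ext K$ the upper envelope $\wh g$ of some continuous convex $g$ strictly exceeds $g$ while $\mu(g)=\mu(\wh g)$ for maximal $\mu$; this is (3). Item (4) is routine: $r_K(\mu)$ is characterised by $g(r_K(\mu))=\mu(g)$ for $g\in\af K$, whence $r_K$ is affine, and it is weakly continuous because each $\mu\mapsto\mu(g)$ is; this is \cite[предложение 2.38]{LukesMalyNetukaSpurny2010}. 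For item (5) --- every $\tau$--additive probability measure carried by $\ext K$ is maximal --- I would simply quote \cite[следствие 3.60]{LukesMalyNetukaSpurny2010}.

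The one point that genuinely needs a word beyond citation, and the step I would actually write out, is the consistency of the various spaces of measures. In (2)--(4) everything lives on the compact set $K$, whereas in (5) one starts from a measure on the generally non--closed set $\ext K$ and must read it as an element of $P(K)$ via the identification $P(\ext K)\subset P_\tau(\ext K)\subset P(K)$ recorded in Proposition~\ref{p:defs:2}; one also has to note that a $G_\de$ compact subset of the compact space $K$ is Baire, hence $\mu$--measurable for every $\mu\in P(K)$, so that the statement of (3) makes sense as written. None of this is difficult, and there is no real obstacle: the work is entirely bookkeeping with conventions rather than proof.
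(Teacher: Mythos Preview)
Your reading is correct: the paper gives no proof of this proposition at all --- it is stated purely as a list of classical facts with references, and nothing more is expected. Your proposal, which amounts to citing the same sources (and, optionally, sketching the standard arguments and checking the measure-theoretic identifications), is exactly in line with the paper's treatment; if anything you are supplying more detail than the paper does.
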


%%% end file('ru//sec/defs.tex') %%%

\section{Cеквенциально плотные подмножества функциональных пространств}
\label{sec:asd}
%%% begin file('ru//sec/asd.tex') %%%

Пусть $X$ пространство. Семейство подмножеств $\cN$ пространства $X$ назовем \term{\cz-сетью}, если для любого ко-нуль множества $U\subset X$ существует не более чем счетное $\cM\subset \cN$, такое что $\bigcup\cM=U$.
Ясно, любая \cz-сеть является сетью.
Положим
\begin{align*}
\nwcz(X) &= \min \{\ |\cN| : \cN \text{является \cz-сетью пространства }X \}+\omega,
\\
\ds(X) &= \min \{\ |S| : S \text{ является секвенциально плотным в }X \}+\omega,
\\
\hds(X) &= \sup \{\ \ds(M): M\subset X  \}.
\end{align*}

В пространстве Фреше--Урысона плотное подмножество секвенциально плотно, поэтому верно следующие утверждение.

\begin{proposition}\label{p:asd:fu}
Если $X$ пространство Фреше--Урысона, то $\ds(X)=d(X)$ и $\hds(X)=hd(X)$.
\end{proposition}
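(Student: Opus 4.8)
План доказательства таков. Сначала я отмечу, что неравенства $d(X)\leq \ds(X)$ и $hd(X)\leq\hds(X)$ верны для любого пространства $X$ и не используют свойство Фреше--Урысона: всякое секвенциально плотное множество плотно, поскольку если $S$ секвенциально плотно в $X$, то каждая точка $x\in X$ является пределом некоторой последовательности из $S$, а значит лежит в $\cl S$, откуда $\cl S=X$. Поэтому останется доказать обратные неравенства.

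Для равенства $\ds(X)=d(X)$ я возьму плотное в $X$ множество $M$ с $|M|+\om=d(X)$. Так как $X$ --- пространство Фреше--Урысона и $x\in X=\cl M$ для каждого $x\in X$, найдётся последовательность точек множества $M$, сходящаяся к $x$; значит, $M$ секвенциально плотно, и $\ds(X)\leq |M|+\om=d(X)$. Вместе с уже отмеченным обратным неравенством это даёт первое равенство (слагаемое $+\om$ входит в определения $\ds$ и $d$ одинаково и на равенство не влияет). Для наследственной версии я воспользуюсь тем, что всякое подпространство пространства Фреше--Урысона снова является пространством Фреше--Урысона: если $A\subset M\subset X$ и точка $x$ лежит в замыкании $A$ в $M$, то $x$ лежит и в замыкании $A$ в $X$, а доставляемая свойством Фреше--Урысона пространства $X$ последовательность точек из $A$, сходящаяся к $x$, лежит в $M$ и сходится к $x$ в $M$. Тогда по уже доказанному $\ds(M)=d(M)$ для каждого $M\subset X$, и, беря точную верхнюю грань по всем $M\subset X$, я получаю $\hds(X)=hd(X)$.

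Существенных препятствий в этом рассуждении нет: всё сводится к определению пространства Фреше--Урысона и к тому, что этот класс замкнут относительно перехода к подпространствам; единственное, за чем стоит следить, --- аккуратный учёт слагаемого $+\om$ в определениях соответствующих кардинальных инвариантов.
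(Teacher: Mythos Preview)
Your argument is correct and follows the same idea as the paper: in a Fr\'echet--Urysohn space every dense subset is sequentially dense, which immediately yields both equalities. The paper states this in a single sentence without further details, so your version is simply a more explicit rendering of the same reasoning (including the observation that the Fr\'echet--Urysohn property is hereditary, which the paper leaves implicit).
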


\begin{lemma}\label{l:asd:1}
Пусть $X$ линделефово $\Si$-пространство. Существует линделефово $p$-пространство $Y$, такое что $X$ является непрерывным образом пространства $Y$ и $w(Y)=nw(X)$.
\end{lemma}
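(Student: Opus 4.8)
The plan is to take an arbitrary Lindelöf $p$-space mapping onto $X$ and shrink its fibres by factoring the second coordinate through a weight-minimal condensation of $X$. By the definition of a Lindelöf $\Si$-space, fix a Lindelöf $p$-space $Z$ and a continuous surjection $g\colon Z\to X$, and, by the definition of a Lindelöf $p$-space, a perfect map $p\colon Z\to M$ onto a space $M$ with a countable base; put $\tau=nw(X)$. By Proposition~\ref{p:defs:1}(6), $iw(X)=nw(X)=\tau$, so there is a condensation $c\colon X\to X'$ with $w(X')\le\tau$. Define $\psi\colon Z\to M\times X'$ by $\psi(z)=(p(z),c(g(z)))$ and let $Y=\psi(Z)$ carry the subspace topology; since $w(M\times X')=\max\{w(M),w(X')\}\le\tau$, we have $w(Y)\le\tau$.

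Next I would verify that $Y$ is a Lindelöf $p$-space and that $\psi\colon Z\to Y$ is a quotient map. For the first claim, let $q\colon Y\to M$ be the restriction to $Y$ of the first projection, so $q\circ\psi=p$; each fibre $q^{-1}(m)=\{m\}\times(c\circ g)(p^{-1}(m))$ is a continuous image of the compact set $p^{-1}(m)$, hence compact, and if $F\subseteq Y$ is closed then $F=\psi(\psi^{-1}(F))$, so $q(F)=p(\psi^{-1}(F))$ is closed because $p$ is a closed map; thus $q$ is a perfect map onto a space with a countable base. For the second claim it suffices to show $\psi$ is closed, and this is the only real difficulty: it is the standard fact that the diagonal of a perfect map and a continuous map is perfect onto its image. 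I would prove it by a tube argument. Given a closed $F\subseteq Z$ and a point $y_0=\psi(z_0)=(m_0,x_0')$ outside $\psi(F)$, each point of the compact fibre $p^{-1}(m_0)$ either lies in an open set missing $F$, or it lies in $F$, in which case its image under $c\circ g$ differs from $x_0'$ and so lies in an open $V\subseteq X'$ with $x_0'\notin\overline V$; covering $p^{-1}(m_0)$ by finitely many such sets, taking their union $O$, choosing (by closedness of $p$) an open $U\ni m_0$ with $p^{-1}(U)\subseteq O$, and letting $W$ be the intersection of the sets $X'\setminus\overline V$ over the finitely many chosen $V$'s, one checks that $(U\times W)\cap Y$ is a neighbourhood of $y_0$ disjoint from $\psi(F)$; hence $\psi(F)$ is closed in $Y$.

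Finally, since $c$ is injective, $\psi(z)=\psi(z')$ implies $c(g(z))=c(g(z'))$ and therefore $g(z)=g(z')$, so $\phi\colon Y\to X$ defined by $\phi(\psi(z))=g(z)$ is a well-defined surjection with $\phi\circ\psi=g$; as $\psi$ is a quotient map, $\phi$ is continuous. Thus $X$ is a continuous image of $Y$, so by Proposition~\ref{p:defs:1}(1),(3), $\tau=nw(X)\le nw(Y)\le w(Y)\le\tau$, giving $w(Y)=nw(X)$. I expect the closedness of $\psi$ to be the main obstacle; the remaining steps are routine bookkeeping.
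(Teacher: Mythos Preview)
Your proof is correct, and the core technique---forming the diagonal of the given map $Z\to X$ with the perfect map $Z\to M$ and using that this diagonal is perfect onto its image---is the same one the paper uses; you even reprove that perfectness fact by hand where the paper simply cites Engelking~3.7.11.

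The one substantive difference is in the choice of target. The paper maps $Z$ into $X\times M$ (not $M\times X'$), sets $Y=h(Z)\subset X\times M$, and takes the map $Y\to X$ to be the first projection---continuous for free, with no need for a condensation $c$, no quotient argument, and no tube lemma. The weight bound then comes from a short chain of cardinal equalities: $Y\subset X\times M$ gives $iw(Y)\le iw(X)$; Proposition~\ref{p:defs:1}(6) gives $iw(X)=nw(X)$; and Proposition~\ref{p:defs:1}(7) gives $w(Y)=iw(Y)$ for the Lindel\"of $p$-space $Y$. Your route front-loads the weight estimate by condensing $X$ first, which makes $w(Y)\le\tau$ immediate but forces you to recover the map $Y\to X$ via the quotient property of $\psi$. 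Both work; the paper's version is shorter because it lets the projection do the work and pushes the arithmetic to the end.
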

\begin{proof}
Существует линделефово $p$-пространство $Z$ и непрерывное сюрьективное отображение $f \: Z\to X$. Существует сепарабельное метризуемое пространство $M$ и непрерывное сюрьективное совершенное отображение $g\: Z\to M$. 
Положим
\[
h= f \diag g\: Z\to X\times M,\ x\mapsto (f(x),g(x)),
\]
$\pi$ проекция $Y=h(Z)\subset X\times M$ на $X$ и $p$ проекция $Y$ на $M$. Так как отображение $g$ совершенно, то отображение $h= f \diag g$ совершенно \cite[теорема 3.7.11]{EngelkingBookRu}. 
Так как $g=p\circ h$ и отображения $g$ и $h$ совершенны, то отображение $p\: Y\to M$ совершенно. Следовательно, пространство $Y$ является линделефовым $p$-пространством. 
Так как $Y\subset X\times M$, то $iw(Y)\leq iw(X\times M)= iw(X)$. 
Так как $X$ линделефово $\Si$-пространство, $Y$ линделефово $p$-пространство и $X$ непрерывный образ $Y$, то 
\[
w(Y)=iw(Y)\leq iw(X)=nw(X)\leq nw(Y)\leq w(Y)
\]
(см.\ предложение \ref{p:defs:1}). Следовательно, $w(Y)=nw(X)$.
\end{proof}

\begin{theorem}\label{t:asd:1}
Пусть $X$ пространство. Тогда
\begin{enumerate}
\item
$nw(X)\leq \nwcz(X)\leq 2^{nw(X)}$;
\item
если $X$ компактно, то $\nwcz(X)=nw(X)=w(X)$;
\item
если $Y$ пространство и $X\subset Y$ $z$-вложено в $Y$, то $\nwcz(X)\leq \nwcz(Y)$;
\item
если $Y$ пространство и $f\: Y\to X$ непрерывное сюрьективное отображение, то $\nwcz(X)\leq \nwcz(Y)$;
\item
если $X$ линделефово, то $\nwcz(X)\leq w(X)$;
\item
если $X$ линделефово $\Si$-пространство, то $\nwcz(X)= nw(X)$;
\item
если $nw(X)=\om$, то $\nwcz(X)=\omega$;
\item
если $X$ плотно в произведении $\prod_{\al\in A}X_\al$  сепарабельных метризуемых не однототочечных пространств и множество $A$ бесконечно, то $\nwcz(X)\leq w(X)=|A|$.
\end{enumerate}
\end{theorem}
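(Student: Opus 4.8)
The plan is to establish the eight items in a dependency‑respecting order: (1), (5), (2), then (3)–(4), then (6)–(7), and finally (8), where the real work lies. For (1): in a Tychonoff space every open set is a union of co‑zero sets, so a $cz$-net is in particular a network and $nw(X)\leq\nwcz(X)$; the reverse bound is free, since the family of \emph{all} co‑zero subsets of $X$ is itself a $cz$-net (for a co‑zero $U$ use $\{U\}$) and its cardinality is at most $|C(X)|=|C_p(X)|\leq 2^{nw(X)}$, using $nw(C_p(X))=nw(X)$ (Proposition~\ref{p:defs:1}) together with the standard inequality $|Z|\leq 2^{nw(Z)}$ (condense $Z$ onto a space of weight $\leq nw(Z)$). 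For (5): a co‑zero set $U=\bigcup_{n\geq 1}\{x:|f(x)|\geq 1/n\}$ inside a Lindel\"of space is a countable union of closed, hence Lindel\"of, subspaces, so $U$ is Lindel\"of; then a base $\cB$ of $X$ with $|\cB|=w(X)$ is a $cz$-net, because $\{B\in\cB:B\subset U\}$ covers the Lindel\"of set $U$ and has a countable subcover. Item (2) is immediate: a compact space is Lindel\"of, so by (5), (1) and Proposition~\ref{p:defs:1}(7) we get $nw(X)\leq\nwcz(X)\leq w(X)=nw(X)$.

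Items (3) and (4) are routine transfers. If $X$ is $z$-embedded in $Y$ and $\cN$ is a $cz$-net of $Y$, then $\{N\cap X:N\in\cN\}$ is a $cz$-net of $X$: a co‑zero $U\subset X$ has the form $(Y\setminus F)\cap X$ for a zero set $F$ of $Y$ by $z$-embeddedness, and a countable $\cM\subset\cN$ with $\bigcup\cM=Y\setminus F$ yields $\bigcup_{M\in\cM}(M\cap X)=U$. If $f\colon Y\to X$ is a continuous surjection and $\cN$ is a $cz$-net of $Y$, then $\{f(N):N\in\cN\}$ is a $cz$-net of $X$: for a co‑zero $U\subset X$ the set $f^{-1}(U)$ is co‑zero, a countable $\cM\subset\cN$ unions to it, and $\bigcup_{M\in\cM}f(M)=f(f^{-1}(U))=U$. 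Item (6) then follows from Lemma~\ref{l:asd:1}: choosing a Lindel\"of $p$-space $Y$ with $w(Y)=nw(X)$ and a continuous surjection $Y\to X$, combine (4), (5) and (1) to get $\nwcz(X)\leq\nwcz(Y)\leq w(Y)=nw(X)\leq\nwcz(X)$. Item (7) is the case $nw(X)=\om$: a space with a countable network is a continuous image of a separable metrizable (hence second‑countable, Lindel\"of) space $M$, so $\nwcz(X)\leq\nwcz(M)\leq w(M)=\om$ by (4) and (5); equivalently, a cosmic space is a Lindel\"of $\Si$-space and (6) applies.

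For (8) write $\kappa=|A|$ and let $\cB$ be the canonical base of $P=\prod_{\al\in A}X_\al$, with $|\cB|=\kappa$ since each $X_\al$ is second‑countable and $A$ is infinite. The equality $w(X)=\kappa$ is standard: $\{B\cap X:B\in\cB\}$ is a base of $X$, so $w(X)\leq\kappa$; conversely, fixing in each $X_\al$ a point $p_\al$ and an open $V_\al\ni p_\al$ with $X_\al\setminus\cl{V_\al}\neq\varnothing$, for any $p\in X$ the neighbourhoods $\pi_\al^{-1}(V_\al)\cap X$ are such that (using density of $X$ in $P$) no subfamily of a putative local base at $p$ of size $<\kappa$ can be a local base, so $\kappa\leq\chi(X)\leq w(X)$. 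For $\nwcz(X)\leq\kappa$ the key is that $\{B\cap X:B\in\cB\}$ is a $cz$-net of $X$. Indeed, given a co‑zero $U=f^{-1}(\R\setminus\{0\})$ with $f\in C(X)$, the classical theorem that a continuous real‑valued function on a dense subspace of a product of separable metrizable spaces depends on only countably many coordinates (for the full product this is elementary; the dense‑subspace refinement is due to Engelking) yields a countable $B\subset A$ and a continuous $g$ on the second‑countable space $\pi_B(X)$ with $f=g\circ\pi_B$; then $g^{-1}(\R\setminus\{0\})$ is open in $\pi_B(X)$, hence a countable union of traces of canonical basic open sets of $\prod_{\al\in B}X_\al$, whose $\pi_B$-preimages lie in $\cB$, so pulling back writes $U$ as a countable union of members of $\{B\cap X:B\in\cB\}$. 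With (1) this gives $\nwcz(X)=\kappa=w(X)$. (Alternatively, the same coordinate argument shows that $\cB$ is a $cz$-net of $P$ and that $X$ is $z$-embedded in $P$, so (3) applies.)

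The genuine obstacle is item (8), for two reasons. First, one must obtain a $cz$-net of size exactly $\kappa$ rather than the a priori larger $\kappa^{\aleph_0}$ of countable subsets of $A$; this is precisely what the observation "the canonical base already works" buys, since every relevant countable index set sits inside some single $B$ supporting the given function. Second, everything rests on the "depends on countably many coordinates" theorem for continuous functions on \emph{dense} subspaces of products of separable metrizable spaces — the product case alone is classical and easy, but it is the dense‑subspace version that makes both the description of co‑zero sets and the $z$-embedding of $X$ in $P$ go through, and a fully self‑contained treatment would have to reprove it.
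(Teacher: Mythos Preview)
Your argument is correct. Items (1)--(6) match the paper's proof essentially line for line; for (7) you take a detour through (4)+(5) or (6), whereas the paper simply observes that a \emph{countable} network is automatically a $cz$-net (any co-zero set is open, hence a union of network members, and there are only countably many of them).

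The real divergence is in (8). The paper compactifies each factor to a metrizable compactum $K_\al$, so that $X$ is dense in the compact product $K=\prod_\al K_\al$; then it invokes Shchepin's theorem that products of metrizable spaces are perfectly $\kappa$-normal, together with the fact that dense subspaces of perfectly $\kappa$-normal spaces are $z$-embedded, and finishes via (2)+(3). Your route instead appeals to the Engelking-type theorem that a continuous real-valued function on a dense subspace of a product of separable metrizable spaces depends on countably many coordinates, and verifies directly that the canonical base traces form a $cz$-net. Both arguments are valid; the paper's is shorter and slots cleanly into the (2)+(3) machinery already established, while yours is more explicit about \emph{which} $cz$-net works and avoids the perfect $\kappa$-normality literature at the cost of importing the countable-coordinates theorem (which, as you note, is the nontrivial ingredient either way). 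Your parenthetical alternative at the end --- deriving the $z$-embedding of $X$ in $P$ from the coordinate theorem and then applying (3) --- is in fact closer in spirit to the paper's proof, just with a different justification for the $z$-embedding.
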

\begin{proof}
(1) Так как \cz-сеть является сетью, то $nw(X)\leq \nwcz(X)$. Так как мощность топологии $X$ не превосходит $2^{nw(X)}$, то $\nwcz(X)\leq 2^{nw(X)}$.

(2) 
Так как любое ко-нуль множество в компакте линделефово, то любая база является \cz-сетью. Следовательно, $\nwcz(X)\leq w(X)$.
Так как $X$ компактно, то $w(X)=nw(X)$ \cite{Hodel1984handbook}. Так как $w(X)=nw(X)\leq \nwcz(X)\leq w(X)$, то $\nwcz(X)=nw(X)=w(X)$.

(3) Пусть $\cN$ \cz-сеть в $Y$. Положим $\cL=\set{M\cap X: M\in\cN}$. Достаточно показать, что $\cL$ \cz-сеть в $X$. Пусть $U$ ко-нуль множество в $X$. Так как $X$ $z$-вложено в $Y$, то $U=W\cap X$ для некоторого ко-нуль множества $W$ в $Y$. Так как $\cN$ \cz-сеть в $Y$, то $W=\bigcup \cN'$ для некоторого счетного $\cN'\subset \cN$. Тогда $\cL'=\set{M\cap X: M\in\cN'}\subset \cL$, то $U=\bigcup\cL'$.

(4) Пусть $\cN$ \cz-сеть в $Y$. Тогда  $\cL=\set{f(M): M\in\cN}$ \cz-сеть в $X$. 

(5) Так как любое ко-нуль множество в линделефовым пространстве линделефово, то любая база является \cz-сетью. Следовательно, $\nwcz(X)\leq w(X)$.

(6) Из леммы \ref{l:asd:1} вытекает, что существует линделефово $p$-пространство $Y$, такое что $X$ является непрерывным образом пространства $Y$ и $w(Y)=nw(X)$.
Из пунктов (4) и (5) вытекает, что $\nwcz(X)\leq \nwcz(Y)\leq w(Y)$. Так как $w(Y)=nw(X)$, то $\nwcz(X)\leq nw(X)$.

(7) Счетная сеть является \cz-сетью.

(8) Пусть $K_\al$ компактное метризуемое расширение пространства $X_\al$ для $\al\in K$. Тогда $X$ плотное подмножество $K=\prod_{\al\in A}K_\al$ и $w(K)=|A|$. 
Произведение  метризуемых  пространств является  совершенно $\ka$-нормальным пространством \cite{Shepin1976}, поэтому $K$ совершенно $\ka$-нормально.
Плотные подмножества совершенно $\ka$-нормальных пространств $z$-вложено в них \cite[Lemma 8.4.5]{at2009}, поэтому $X$ $z$-вложено в компакт $K$. Из  (2) и (3) вытекает, что $\nwcz(X)\leq w(K)=|A|$.
\end{proof}

\begin{proposition}\label{p:asd:1}
Пусть $X$ пространство и $\cN$ семейство подмножеств $X$. Следующие условия эквивалентны:
\begin{enumerate}
\item
семейство $\cN$ является \cz-сетью пространства $X$;
\item
для любого $f\in C(X)$ существует $\cP\subset \cN$, $|\cP|\leq\om$, такое что выполняется условие
\begin{enumerate}
\item[$(F_{cz})$]
для каждого $x\in X$ и $\ep>0$ существует $P\in\cP$, для которого $x\in P$ и
\[
\Om(f,P) = \sup_{x_1,x_2\in P} |f(x_1) - f(x_2)| < \ep.
\]
\end{enumerate}
\end{enumerate}
\end{proposition}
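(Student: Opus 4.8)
Я докажу эквивалентность напрямую, как две импликации, переводя условие малого колебания функции в термины ко-нуль множеств и обратно.

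Для $(1)\Rightarrow(2)$ зафиксирую $f\in C(X)$ и сперва построю для каждого $n\geq 1$ счётное покрытие $X$ ко-нуль множествами, на которых колебание $f$ мало. Возьму $U^{n}_{j}=f^{-1}\big((\tfrac{j-1}{n},\tfrac{j+1}{n})\big)$, $j\in\Z$. Каждое $U^{n}_{j}$ --- ко-нуль множество в $X$, так как открытые подмножества $\R$ суть ко-нуль множества, а прообраз ко-нуль множества при непрерывной функции есть ко-нуль множество. Соседние интервалы пересекаются, поэтому $\set{U^{n}_{j}:j\in\Z}$ --- покрытие $X$, и $\Om(f,U^{n}_{j})\leq 2/n$. По условию (1) каждое $U^{n}_{j}$ есть $\bigcup\cM^{n}_{j}$ с не более чем счётным $\cM^{n}_{j}\subset\cN$; положу $\cP=\bigcup_{n\geq 1,\,j\in\Z}\cM^{n}_{j}$ --- не более чем счётное подсемейство $\cN$. Чтобы проверить $(F_{cz})$, для данных $x\in X$ и $\ep>0$ выберу $n$ с $2/n<\ep$ и $j$ с $f(x)\in(\tfrac{j-1}{n},\tfrac{j+1}{n})$; тогда $x\in P$ для некоторого $P\in\cM^{n}_{j}\subset\cP$, и $\Om(f,P)\leq\Om(f,U^{n}_{j})\leq 2/n<\ep$.

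Для $(2)\Rightarrow(1)$ пусть $U\subset X$ --- ко-нуль множество; по определению $U=\set{x\in X:h(x)\neq 0}$ при некотором $h\in C(X)$. Применю (2) к функции $f=h$ и получу не более чем счётное $\cP\subset\cN$ с условием $(F_{cz})$. Утверждаю, что $U$ совпадает с объединением всех тех $P\in\cP$, для которых $P\subset U$. Это объединение очевидно содержится в $U$. Обратно, возьму $x\in U$, положу $c=|h(x)|>0$ и применю $(F_{cz})$ с $\ep=c$: найдётся $P\in\cP$, для которого $x\in P$ и $\Om(h,P)<c$. Тогда для всякого $y\in P$ имеем $|h(y)-h(x)|\leq\Om(h,P)<c=|h(x)|$, откуда $h(y)\neq 0$, то есть $P\subset U$. Значит, $x$ лежит в указанном объединении, и оно совпадает с $U$. Поскольку $\cP$ не более чем счётно, требуемое счётное подсемейство $\cN$ получено, то есть $\cN$ --- \cz-сеть.

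Серьёзных препятствий здесь нет; за чем стоит следить --- это за выбором функции-свидетеля в импликации $(2)\Rightarrow(1)$: применять (2) надо именно к функции $h$, задающей $U$, и воспользоваться тем, что малое колебание переносит условие $h\neq 0$ с точки на её окрестность. А в импликации $(1)\Rightarrow(2)$ нужно обеспечить счётность покрывающего семейства ко-нуль множеств, ради чего центры интервалов берутся в $\tfrac1n\Z$, а не во всём $\R$.
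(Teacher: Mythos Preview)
Ваше доказательство корректно и по существу совпадает с доказательством в статье: в обеих импликациях идея та же --- в $(1)\Rightarrow(2)$ разложить $X$ на счётное семейство ко-нуль прообразов малых интервалов и применить \cz-сеть, а в $(2)\Rightarrow(1)$ взять $\cM=\set{P\in\cP: P\subset U}$ и показать $U=\bigcup\cM$ через оценку колебания. Единственное косметическое отличие --- статья берёт абстрактную счётную базу $\R$, а вы --- конкретные интервалы $(\tfrac{j-1}{n},\tfrac{j+1}{n})$; это ничего не меняет.
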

\begin{proof}
$(1)\rarr(2)$ Пусть $\cB$ есть счетная база $\R$. Для $U\in\cB$ существует $\cP_U\subset \cN$, $|\cP_U|\leq\om$, такое что $\bigcup\cP_U=U$. Положим $\cP=\bigcup_{U\in\cB}$. Тогда для $\cP$ выполняется условие $(F_{cz})$.

$(2)\rarr(1)$ Пусть $W=f^{-1}(\R\setminus\sset 0)$ ко-нуль множество и $f\in C(X)$. Возьмем $\cP\subset \cN$, $|\cP|\leq\om$, для которого выполняется условие $(F_{cz})$. Положим $\cM=\set{P\in\cP: P\subset W}$. Тогда $W=\bigcup\cM$.
\end{proof}

\begin{theorem}\label{t:asd:2}
Пусть $X$ пространство. Тогда $\hds(C_p(X))\leq \nwcz(X)$.
\end{theorem}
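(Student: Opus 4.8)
Write $\tau=\nwcz(X)$ and fix a \cz-net $\cN$ of $X$ with $|\cN|\le\tau$; by Proposition~\ref{p:asd:1}, for each $f\in C(X)$ there is a countable $\cP_f\subseteq\cN$ satisfying $(F_{cz})$, so in particular each $\cP_f$ covers $X$. Fix $M\subseteq C_p(X)$; the goal is an $S\subseteq M$ with $|S|\le\tau$ that is sequentially dense in $M$. The role of $(F_{cz})$ is the following elementary reduction. If $g\in C(X)$ and a sequence $(g_k)_k\subseteq C(X)$ converges to $g$ uniformly on each member of $\cP_g$, then $g_k\to g$ in $C_p(X)$: given $x\in X$ pick $N\in\cP_g$ with $x\in N$; since $g_k-g$ is continuous and $x\in\cl N$, one has $|g_k(x)-g(x)|\le\sup_{y\in N}|g_k(y)-g(y)|\to 0$. (The oscillation clause $\Om(g,N)<\ep$ of $(F_{cz})$ can be brought in to relax ``uniform on $N$'' to ``uniform on $N$ up to an additive constant, together with convergence at one fixed point of $N$'', which is what one actually uses below.) So it suffices to find $S\subseteq M$, $|S|\le\tau$, such that every $g\in M$ is the limit of a sequence from $S$ in this ``uniform-on-$\cP_g$'' sense.

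To this end, topologise $C(X)$ by $\sigma$, the topology of uniform convergence on the members of $\cN$ (with the bounded metric $\min\{1,\sup_N|\cdot|\}$ on each piece); then $\sigma$ refines $\tau_p(X)$. For a fixed $g$ the coarser topology $\sigma_g$ of uniform convergence on the countable family $\cP_g$ is metrizable. Hence, once $S\subseteq M$ is $\sigma$-dense, it is $\sigma_g$-dense in $M$ for every $g$, and metrizability of $(M,\sigma_g)$ yields a sequence $g_k\in S$ with $g_k\to g$ in $\sigma_g$, i.e.\ uniformly on each $N\in\cP_g$; by the reduction above this sequence converges to $g$ in $C_p(X)$. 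Thus $S$ is sequentially dense in $M$, and the whole problem reduces to: \emph{choose a $\sigma$-dense subset of $M$ of cardinality $\le\tau$}, i.e.\ bound $d(M,\sigma)$ by $\tau$.

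The map $f\mapsto(\restr fN)_{N\in\cN}$ embeds $(C(X),\sigma)$ into $\prod_{N\in\cN}(\sset{\restr hN:h\in C(X)},\ \sup)$, a product of $\le\tau$ metrizable spaces, so the task is to see that this product — or a suitable variant of it — has density $\le\tau$. I expect this cardinality estimate to be the main obstacle: a crude count only gives $d(M,\sigma)\le\tau^{\om}$ (using $|C(X)|\le\tau^{\om}$, each $f$ being coded by the \cz-presentations of the co-zero sets $f^{-1}((-\infty,q))$, $q\in\Q$), which is not yet $\tau$. Closing the gap is where the \cz-net — not a mere network — must be used, and where the oscillation clause of $(F_{cz})$ and a careful choice of the pieces (rather than the raw members of $\cN$) enter: one wants each individual ``restriction space'' to have weight $\le\tau$, which can be arranged when the pieces may be taken separable — then each factor is second countable and $w\!\left(\prod_{N\in\cN}(\cdots)\right)\le|\cN|\cdot\om\le\tau$ — and in the exceptional cases one expects $\tau$ to be so large (e.g.\ $\tau\ge|C(X)|$) that $S=M$ already works. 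Making this dichotomy precise — refining $\cN$ within cardinality $\le\tau$ (legitimate, since finite intersections and countable unions of \cz-nets are again \cz-nets) so that the pieces behave, and disposing of the exceptional case by cardinal arithmetic — is the technical core I anticipate in the argument.
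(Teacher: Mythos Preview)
Your reduction to finding a $\sigma$-dense subset of $M$ of size $\le\tau$ is sound, and you correctly identify that this is where the argument stalls: the topology $\sigma$ of uniform convergence on members of $\cN$ is simply too fine, and in general $d(M,\sigma)$ can exceed $\tau$ (take $X$ compact with $w(X)>\tau=\om$ if you want a concrete failure of your ``separable pieces'' hope --- then $X\in\cN$ and the single factor $(C(X),\nmc)$ already has density $w(X)$). Your proposed dichotomy and refinement of $\cN$ do not rescue this; nothing forces the pieces of a \cz-net to be small, and the ``exceptional case $\tau\ge|C(X)|$'' need not occur.

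The missing idea is to replace $\sigma$ by a much coarser topology that still does the job. First reduce to bounded functions via the homeomorphism $C_p(X)\cong C_p(X,(-1,1))$. Then, for each $P\in\cN$, instead of recording the whole restriction $\restr fP$, record only the two real numbers $\F^+_P(f)=\sup_P f$ and $\F^-_P(f)=\inf_P f$. The initial topology $\tp$ on $M$ generated by the family $\{\F^+_P,\F^-_P:P\in\cN\}$ maps $M$ into $\R^{2|\cN|}$, so $w(M,\tp)\le\tau$ and hence $d(M,\tp)\le\tau$ for free --- no cardinal arithmetic, no dichotomy. The oscillation clause of $(F_{cz})$ is exactly what makes $\tp$ sufficient: if $g\in M$ satisfies $\F^-_{P}(f)-\delta<\F^-_{P}(g)$ and $\F^+_{P}(g)<\F^+_{P}(f)+\delta$, then for every $x\in P$ one has $|f(x)-g(x)|<\Om(f,P)+\delta$. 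So for $f\in M$ with its countable $(F_{cz})$-family $\cP_f=\{P_n\}$, the $\tp$-neighbourhoods $\{g:\F^\pm_{P_n}(g)$ within $2^{-m}$ of $\F^\pm_{P_n}(f)\}$ form a countable base at $f$ for a topology finer than $\tau_p(X)$, and a diagonal choice from a $\tp$-dense $S$ yields the desired sequence. You actually hint at this relaxation (``uniform on $N$ up to an additive constant'') in your parenthetical remark, but then abandon it; that remark \emph{is} the proof.
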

\begin{proof}
Пусть $M\subset C_p(X)$. Надо найти $S\subset M$, такое что $S$ секвенциально плотно в $M$ и $|S|\leq \nwcz(X)$.
Пространство $C_p(X)$ гомеоморфно пространству $C_p(X,(-1,1))$, так что будем считать, что $M\subset C_p(X,(-1,1))$. 

Пусть $\tau=\nwcz(X)$ и $\cN$ есть \cz-сеть пространства $X$, мощность которой не превосходит $\tau$.
%Обозначим $B=C(X,(-1,1))$.
Для $P\in\cP$ положим 
\begin{align*}
\F^+_P &\: M\to \R,\ f\mapsto \sup \set{ f(x)): x\in P },
\\
\F^-_P &\: M\to \R,\ f\mapsto \inf \set{ f(x)): x\in P }.
\end{align*}
Пусть $\tp$ есть топология на $M$, порожденная семейством функций $\set{\F^+_P,\F^-_P: \al<\tau}$.
Тогда вес пространства $(M,\tp)$ не превосходит $\tau$. Возьмем $S\subset M$ таким образом, что $|S|\leq\tau$ и $S$ плотно в $M$ относительно топологии $\tp$.

Покажем, что множество $S$ секвенциально плотно в $M$ относительно топологии $C_p(X)$. Пусть $f\in M$. Из предложения \ref{p:asd:1} вытекает, что существует $\cP=\set{P_n:n<\om}\subset \cN$, для которого выполняется условие $(F_{cz})$. Для $n,m\in\om$ положим
\[
W_{n,m} = \set{g\in M:  -\tfrac{1}{2^m}+\F^-_{P_n}(f) < \F^-_{P_n}(g) \leq \F^+_{P_n}(g) <  \tfrac{1}{2^m}+\F^+_{P_n}(f) }.
\]
Множество $W_{n,m}$ является открытой в $(M,\tp)$ окрестностью точки $f$. Пусть $(n_k,m_k)_{k<\om}$ есть нумерация множества $\om\times \om$. Положим $U_k=\bigcap_{i\leq k}W_{n_i,m_i}$ и выберем $f_k\in U_k\cap S$ для $k<\om$. 

Докажем, что последовательность $(f_k)_k$ поточечно сходится к $f$. Пусть $x\in X$ и $\ep>0$. Возьмем $m<\om$, такое что $\frac{1}{2^m}<\frac \ep 2 $. Из $(F_{cz})$ вытекает, что существует $n\in \om$, такое что $x\in P_n$ и $\Om(f,P_n)<\frac \ep 2$. Тогда $|f(x)-g(x)|<\ep$ для любого $g\in W_{n,m}$. Для некоторого $N<\om$, $n=n_N$ и $m=m_N$. Тогда $f_k\in W_{n,m}$ и $|f(x)-f_k(x)|<\ep$ для $k\geq N$.
\end{proof}

\begin{proposition}[Теорема Комфорда--Хагера {\cite{ComfortHager1970}}]\label{p:asd:ch}
Пусть $X$ пространство. Тогда
\[
w(\bt X)^\om = |C(X)|^\om = |C(X)| \leq w(X)^{wl(X)}.
\]
\end{proposition}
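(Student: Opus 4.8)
The plan hinges on the single identity $|C(X)|=w(\bt X)^{\om}$: raising it to the power $\om$ gives $|C(X)|^{\om}=w(\bt X)^{\om\cdot\om}=w(\bt X)^{\om}=|C(X)|$, so all three asserted relations are immediate from it together with the inequality $|C(X)|\le w(X)^{wl(X)}$. Thus there are two essentially independent tasks: (A) prove $|C(X)|=w(\bt X)^{\om}$, and (B) prove $|C(X)|\le w(X)^{wl(X)}$. I may assume $X$ is infinite, the finite case being trivial.

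For (A) I would first pass to a compact space. The map $f\mapsto\frac2\pi\arctan\circ f$ gives $|C(X)|=|C^*(X)|$, and the universal property of $\bt X$ gives $C^*(X)\cong C(\bt X)$; hence $|C(X)|=|C(\bt X)|$, and it remains to prove $|C(K)|=w(K)^{\om}$ for an arbitrary infinite compact $K$. For the direction $\le$: $C(K)$ with the $\sup$-norm is a metric space, the $\Q$-algebra generated by the coordinate functions of an embedding $K\hookrightarrow[0,1]^{w(K)}$ together with the constants is dense by Stone--Weierstrass and has cardinality $\le w(K)$, and in a metric space every point is a limit of a sequence from a dense subset, so $|C(K)|\le d(C(K))^{\om}\le w(K)^{\om}$. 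For $\ge$: by Zorn's lemma choose a maximal $\tfrac12$-separated subset $\cG$ of the set $C(K,[0,1])$ of continuous functions $K\to[0,1]$. By maximality $\cG$ is $\tfrac12$-dense; by Urysohn's lemma and approximation a $\tfrac12$-dense family separates the points of $K$, so the diagonal map embeds $K$ into $[0,1]^{\cG}$ and $|\cG|\ge w(K)$. Enumerate $\cG=\{e_\al:\al<\mu\}$ injectively with $\mu\ge w(K)$, and for each strictly increasing $s=(\al_n)_n\in\mu^{\om}$ set $f_s=\sum_n 4^{-n}e_{\al_n}\in C(K)$. If $s=(\al_n)$ and $t=(\be_n)$ first differ at index $m$ but $f_s=f_t$, then $e_{\al_m}-e_{\be_m}=-\sum_{n>m}4^{m-n}(e_{\al_n}-e_{\be_n})$, whose $\sup$-norm is at most $\sum_{k\ge1}4^{-k}=\tfrac13<\tfrac12$, contradicting $\|e_{\al_m}-e_{\be_m}\|_\infty\ge\tfrac12$. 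Hence $s\mapsto f_s$ is injective on the $\mu^{\om}\ge w(K)^{\om}$ increasing sequences, giving $|C(K)|\ge w(K)^{\om}$, and thus $|C(K)|=w(K)^{\om}$; applying this to $K=\bt X$ yields (A).

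For (B) put $\la=w(X)$, $\te=wl(X)$ and fix a base $\cB$ of $X$ with $|\cB|=\la$. For $f\in C(X)$ and $k\ge1$ the family $\{B\in\cB:\Om(f,B)<1/k\}$ (oscillation $\Om$ as in Proposition \ref{p:asd:1}) is an open cover of $X$, so by the definition of $wl(X)$ it has a subfamily $\cV^f_k\subset\cB$ with $|\cV^f_k|\le\te$ and $\cl{\bigcup\cV^f_k}=X$; for $B\in\cV^f_k$ pick a rational $q^{f,k}_B$ with $|f-q^{f,k}_B|<1/k$ on $B$. The correspondence $f\mapsto(\cV^f_k,(q^{f,k}_B)_B)_{k\ge1}$ is injective: if $f$ and $g$ have the same image, then for every $k$ they differ by $<2/k$ on the dense set $\bigcup\cV^f_k$, hence (by continuity) everywhere, so $f=g$. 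The set of possible images has cardinality at most $(\la^{\te}\cdot\aleph_0^{\te})^{\om}=\la^{\te}=w(X)^{wl(X)}$, using $\te\ge\om$ and $\la\ge2$; so $|C(X)|\le w(X)^{wl(X)}$.

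The real obstacle I anticipate is the lower bound $|C(K)|\ge w(K)^{\om}$ in part (A): it requires producing on an arbitrary compact $K$ a $\tfrac12$-separated family of at least $w(K)$ functions valued in $[0,1]$, and the maximal-separated-set argument works only because the chain of implications from $\tfrac12$-density to point-separation to the bound $w(K)\le|\cG|$ closes up, and because the base-$4$ estimate keeps $\tfrac13$ strictly below $\tfrac12$. Everything else is routine: standard properties of $\bt X$ and Stone--Weierstrass, and cardinal bookkeeping in which the unavoidable terms $2^{\aleph_0}$ and $2^{wl(X)}$ are absorbed into $w(\bt X)^{\om}$ and $w(X)^{wl(X)}$ respectively.
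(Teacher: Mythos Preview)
The paper does not give a proof of this proposition; it is stated with a citation to Comfort--Hager and used as a black box. Your argument is correct and is in fact a faithful reconstruction of the classical Comfort--Hager proof: pass to $|C(K)|=w(K)^{\om}$ for compact $K$ via $C^*(X)\cong C(\bt X)$; obtain the upper bound from Stone--Weierstrass and separability of $(C(K),\nmc)$; obtain the lower bound from a maximal $\tfrac12$-separated family in $C(K,[0,1])$ (which separates points of $K$, hence has cardinality $\ge w(K)$) together with the base-$4$ coding of sequences; and finally bound $|C(X)|$ by $w(X)^{wl(X)}$ through the oscillation-and-rational-tag injection. Two small remarks: first, in the coding step you may drop the restriction to strictly increasing sequences, since your injectivity argument works verbatim for arbitrary $s\in\mu^{\om}$ and this spares you the side computation $|[\mu]^{\aleph_0}|=\mu^{\aleph_0}$; second, under the paper's conventions $wl(X)$ is defined without the ``$+\om$'' and can be finite, so the assertion literally fails for finite $X$ --- your ``finite case is trivial'' should really read ``the statement is intended for infinite $X$'', which is harmless since every application in the paper is to infinite spaces.
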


\begin{theorem}\label{t:asd:3}
Пусть $X$ пространство. Тогда 
\[
\nwcz(X)\leq w(\bt X)\leq w(\bt X)^\om = \ds(C_p(X))^\om = \nwcz(X)^\om.
\]
\end{theorem}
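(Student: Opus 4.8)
The plan is to prove the chain of (in)equalities
\[
\nwcz(X)\leq w(\bt X)\leq w(\bt X)^\om = \ds(C_p(X))^\om = \nwcz(X)^\om
\]
by combining the results already available in the excerpt, mostly Theorem~\ref{t:asd:1}(2), Theorem~\ref{t:asd:2}, and the Comfort--Hager theorem (Proposition~\ref{p:asd:ch}). The first inequality $\nwcz(X)\leq w(\bt X)$ follows because $X$ is $z$-embedded in its Stone--\v Cech compactification $\bt X$ (every zero set of $X$ is the trace of a zero set of $\bt X$), so Theorem~\ref{t:asd:1}(3) gives $\nwcz(X)\leq\nwcz(\bt X)$, and since $\bt X$ is compact, Theorem~\ref{t:asd:1}(2) gives $\nwcz(\bt X)=w(\bt X)$. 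The second step $w(\bt X)\leq w(\bt X)^\om$ is trivial.

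For the equalities I would run the following cycle of inequalities. First, $w(\bt X)^\om=|C(X)|^\om=|C(X)|$ is exactly Comfort--Hager (Proposition~\ref{p:asd:ch}); note $|C(X)|=|C^*(X)|=|C(\bt X)|$ and the weight of a compactum $K$ satisfies $|C(K)|=w(K)^\om$, which is the content one needs. Next, I would show $\ds(C_p(X))\leq|C(X)|$: any dense subset of $C_p(X)$ of size $\leq|C(X)|$ (the whole space will do) is in particular sequentially dense, so $\ds(C_p(X))\leq|C(X)|$, hence $\ds(C_p(X))^\om\leq|C(X)|^\om=|C(X)|$. Then $\nwcz(X)\leq\ds(C_p(X))$ follows from Theorem~\ref{t:asd:2} ($\hds(C_p(X))\leq\nwcz(X)$) — wait, that inequality goes the wrong way; instead one uses $\nwcz(X)\leq w(\bt X)\leq|C(X)|$ from the already-established part together with... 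Let me reorganize: the clean cycle is
\[
\nwcz(X)\leq w(\bt X)\leq w(\bt X)^\om=|C(X)|\geq\ds(C_p(X))\quad\text{and}\quad \nwcz(X)\leq w(\bt X).
\]
So it suffices to prove $|C(X)|\leq\nwcz(X)^\om$ and $\nwcz(X)\leq\ds(C_p(X))^\om$ to close everything; the latter is obtained by applying Theorem~\ref{t:asd:2} to recover a $cz$-net from a sequentially dense set, or more directly: a sequentially dense $S\subset C_p(X)$ of size $\ds(C_p(X))$ separates points of $X$ badly, but one can use $S$ to build a $cz$-net — actually the cleanest route is $\nwcz(X)\leq w(\bt X)\leq|C(X)|=\ds(C_p(X))^\om$ once we know $|C(X)|\leq\ds(C_p(X))^\om$.

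Thus the real work reduces to the single inequality $|C(X)|\leq\ds(C_p(X))^\om$ (equivalently $w(\bt X)^\om\leq\ds(C_p(X))^\om$, equivalently $\nwcz(X)^\om\geq|C(X)|$): given a sequentially dense $S\subset C_p(X)$ with $|S|=\ds(C_p(X))=:\la$, every $f\in C(X)$ is a pointwise limit of a sequence from $S$, so $C(X)$ embeds into the set of all $S$-indexed... no, into the set of pointwise limits of $\om$-sequences from $S$, and there are at most $\la^\om$ such sequences, whence $|C(X)|\leq\la^\om=\ds(C_p(X))^\om$. Combined with Comfort--Hager and with $\nwcz(X)^\om\leq (2^{nw(X)})^\om$... actually we also need $\nwcz(X)^\om=|C(X)|$: the inequality $\nwcz(X)\leq w(\bt X)$ gives $\nwcz(X)^\om\leq w(\bt X)^\om=|C(X)|$, and conversely Theorem~\ref{t:asd:2} (via $\hds\leq\nwcz$, applied to $M=C_p(X)$ itself, giving $\ds(C_p(X))\leq\nwcz(X)$) yields $|C(X)|\leq\ds(C_p(X))^\om\leq\nwcz(X)^\om$. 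The main obstacle, and the place to be careful, is bookkeeping the direction of each inequality so that the four-term cycle genuinely closes — in particular making sure Theorem~\ref{t:asd:2} is invoked with the correct $M$ (namely $M=C_p(X)$, so that $\ds(C_p(X))\leq\hds(C_p(X))\leq\nwcz(X)$) and that the $z$-embedding $X\hookrightarrow\bt X$ is justified (every zero set of a Tychonoff space extends to a zero set of $\bt X$, a standard fact). Everything else is a routine cardinal-arithmetic chase.

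\begin{proof}
Every zero set of a Tychonoff space $X$ is the restriction to $X$ of a zero set of $\bt X$, so $X$ is $z$-embedded in the compactum $\bt X$. By Theorem~\ref{t:asd:1}(3) and (2),
\[
\nwcz(X)\leq\nwcz(\bt X)=nw(\bt X)=w(\bt X),
\]
which is the first inequality, and $w(\bt X)\leq w(\bt X)^\om$ is trivial.

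Applying Theorem~\ref{t:asd:2} with $M=C_p(X)$ gives $\ds(C_p(X))\leq\hds(C_p(X))\leq\nwcz(X)$, hence
\[
\ds(C_p(X))^\om\leq\nwcz(X)^\om\leq w(\bt X)^\om.
\]
Conversely, let $S\subset C_p(X)$ be sequentially dense with $|S|=\ds(C_p(X))$. Each $f\in C(X)$ is the pointwise limit of some sequence in $S$, and there are at most $|S|^\om=\ds(C_p(X))^\om$ sequences in $S$; distinct functions have distinct limits, so
\[
|C(X)|\leq\ds(C_p(X))^\om.
\]
By the Comfort--Hager theorem (Proposition~\ref{p:asd:ch}), $w(\bt X)^\om=|C(X)|^\om=|C(X)|$. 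Combining the last three displays,
\[
w(\bt X)^\om=|C(X)|\leq\ds(C_p(X))^\om\leq\nwcz(X)^\om\leq w(\bt X)^\om,
\]
so all these quantities coincide, which is the asserted chain of equalities.
\end{proof}
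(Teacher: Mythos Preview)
Your proof is correct and follows essentially the same route as the paper: the $z$-embedding of $X$ in $\bt X$ together with Theorem~\ref{t:asd:1}(2),(3) gives $\nwcz(X)\leq w(\bt X)$; Theorem~\ref{t:asd:2} gives $\ds(C_p(X))\leq\nwcz(X)$; a sequentially dense $S$ yields $|C(X)|\leq |S|^\om$; and Comfort--Hager closes the cycle. (The clause ``distinct functions have distinct limits'' is unnecessary and slightly garbled --- the bound $|C(X)|\leq |S|^\om$ follows simply because the map from $S^\om$ to pointwise limits surjects onto $C(X)$ --- but the argument is otherwise identical to the paper's.)
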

\begin{proof}
Из теоремы \ref{t:asd:2} вытекает, что $\ds(C_p(X))\leq \nwcz(X)$.
Cуществует $S\subset C_p(X)$, такое что $S$ секвенциально плотно в $C_p(X)$ и $|S|\leq \ds(C_p(X))$.
Так как $S$ секвенциально плотно в $C_p(X)$, то $|C(X)|\leq |S|^\om=\ds(C_p(X))^\om$.
Так как $X$ $C^*$-вложено в $\bt X$, то $X$ $z$-вложено в $\bt X$. Из теоремы \ref{t:asd:1}(2) и (3) вытекает $\nwcz(X)\leq w(\bt X)$. Следовательно, $\nwcz(X)^\om\leq w(\bt X)^\om$.
Теоремы Комфорда--Хагера влечет, что $\leq w(\bt X)^\om=|C(X)|^\om\leq \nwcz(X)^\om$. Получаем $\nwcz(X)\leq w(\bt X)$ и $w(\bt X)^\om = \ds(C_p(X))^\om = \nwcz(X)^\om$.
\end{proof}

Из теорем  \ref{t:asd:1}(5), \ref{t:asd:2} и \ref{t:asd:3} и теоремы Комфорда--Хагера (предложение \ref{p:asd:ch}) вытекают следующие утверждение.

\begin{cor}\label{c:asd:1}
Пусть $X$ линделефово пространство. Тогда $\hds(C_p(X))\leq w(X)$ и 
\[
w(X) \leq w(\bt X) \leq w(\bt X)^\om = w(X)^\om.
\]
\end{cor}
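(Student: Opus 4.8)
The plan is to read the corollary off from Theorems~\ref{t:asd:1}(5), \ref{t:asd:2}, \ref{t:asd:3} together with the Comfort--Hager estimate (Proposition~\ref{p:asd:ch}); no new construction is needed, only the bookkeeping of a few cardinal inequalities. The one genuinely topological input is the Lindel\"of hypothesis: since every co-null subset of a Lindel\"of space is again Lindel\"of, any base of $X$ is a \cz-net, so Theorem~\ref{t:asd:1}(5) yields $\nwcz(X)\leq w(X)$.

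First I would prove $\hds(C_p(X))\leq w(X)$: Theorem~\ref{t:asd:2} gives $\hds(C_p(X))\leq \nwcz(X)$, and combining this with $\nwcz(X)\leq w(X)$ finishes this half. Next I would verify the chain $w(X)\leq w(\bt X)\leq w(\bt X)^\om = w(X)^\om$. The first inequality is monotonicity of weight under the subspace inclusion $X\subset\bt X$, and the middle one is trivial. For the equality, Theorem~\ref{t:asd:3} supplies $w(\bt X)^\om=\nwcz(X)^\om$, and the sandwich $\nwcz(X)\leq w(X)\leq w(\bt X)$ forces $\nwcz(X)^\om = w(X)^\om = w(\bt X)^\om$. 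Alternatively, Comfort--Hager gives $w(\bt X)^\om=|C(X)|\leq w(X)^{wl(X)}=w(X)^\om$, since $wl(X)\leq l(X)=\om$ for a Lindel\"of space, and $w(X)^\om\leq w(\bt X)^\om$ then closes the loop.

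I do not expect a real obstacle, as all the substance is already packaged in the cited theorems. The only point that needs a moment's care is upgrading the equality $\nwcz(X)=w(X)$ to the equality of $\om$-th powers before matching it against the right-hand side of Theorem~\ref{t:asd:3}; the sandwich $\nwcz(X)\leq w(X)\leq w(\bt X)$ combined with $w(\bt X)^\om=\nwcz(X)^\om$ does precisely this.
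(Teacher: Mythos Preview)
Your proposal is correct and follows exactly the route the paper takes: the paper's own proof is the single sentence that the corollary follows from Theorems~\ref{t:asd:1}(5), \ref{t:asd:2}, \ref{t:asd:3} and the Comfort--Hager theorem (Proposition~\ref{p:asd:ch}), and you have simply spelled out how those ingredients combine. The only cosmetic slip is the phrase ``upgrading the equality $\nwcz(X)=w(X)$'' --- you only have the inequality $\nwcz(X)\leq w(X)$ --- but your actual sandwich argument is stated correctly and does not rely on equality.
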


В \cite{Hager1969} было доказано, что $\ds(C_p(X))\leq w(X)$ и $|C(X)|= w(X)^\om$ для линделефово пространства $X$.

Из теорем  \ref{t:asd:1}(6), \ref{t:asd:2} и \ref{t:asd:3}  следующие утверждение.
\begin{cor}\label{c:asd:2}
Пусть $X$ линделефово $\Si$-пространство. Тогда $\hds(C_p(X))\leq nw(X)$ и 
\[
nw(X) \leq w(\bt X) \leq w(\bt X)^\om = nw(X)^\om.
\]
\end{cor}

В \cite{Tkachenko2017weight} было доказано, что $w(\bt X)\leq |C(X)|\leq nw(X)^\om$ для линделефово $\Si$-пространства $X$.

%%% end file('ru//sec/asd.tex') %%%

\section{Вес выпуклых компактов и пространство вероятностных мер}
\label{sec:wcps}
%%% begin file('ru//sec/wcps.tex') %%%

\begin{proposition}\label{p:wcps:1}
Пусть $K$ выпуклый компакт. Тогда $w(K)= nw(P_{\max}(K))$.
\end{proposition}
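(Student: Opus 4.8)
The plan is to show both inequalities $w(K)\leq nw(P_{\max}(K))$ and $nw(P_{\max}(K))\leq w(K)$. The easy direction is the second one: the barycenter map $r_K\colon P(K)\to K$ is continuous and surjective (Proposition \ref{p:defs:4}(4) together with the fact that point masses $\delta_x$ are sent to $x$), and its restriction to $P_{\max}(K)$ is still surjective by the Choquet--Bishop--de~Leeuw theorem (Proposition \ref{p:defs:4}(2)), which asserts $p_K(P_{\max}(K))=K$. Hence $K$ is a continuous image of $P_{\max}(K)$, so by Proposition \ref{p:defs:1}(3) we get $nw(K)\leq nw(P_{\max}(K))$, and since $K$ is compact, $w(K)=nw(K)\leq nw(P_{\max}(K))$ by Proposition \ref{p:defs:1}(7).

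For the reverse inequality $nw(P_{\max}(K))\leq w(K)$, the idea is to realize $P_{\max}(K)$ inside $P(K)$, which is a compact space (a closed subset of the compact space $P(K)$ when $K$ is compact — here we use that $P(K)=P_\tau(K)$ for compact $K$). By Proposition \ref{p:defs:2} (applied with $Y=X=K$), we have $w(P(K))=w(K)$. Since $P_{\max}(K)\subset P(K)$ is a subspace of a compact space of weight $w(K)$, it has network weight at most $w(K)$: indeed $nw(P_{\max}(K))\leq w(P_{\max}(K))\leq w(P(K))=w(K)$ by Proposition \ref{p:defs:1}(1) and monotonicity of weight under subspaces. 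Combining the two inequalities gives $w(K)=nw(P_{\max}(K))$.

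I expect essentially no obstacle here: the statement is a straightforward bookkeeping argument combining the Choquet--Bishop--de~Leeuw theorem (to get surjectivity of the barycenter map restricted to maximal measures), Banakh's computation $w(P(K))=w(K)$, and the elementary inequalities between $w$ and $nw$ on compacta and their subspaces. The only point requiring a moment's care is that $P_{\max}(K)$ is genuinely a topological subspace of the compact space $P(K)$ (so that $w$ is monotone as claimed), which is immediate from the definition of the weak topology; one does not even need $P_{\max}(K)$ to be closed for the network-weight bound, only that it is a subspace. If a cleaner statement is wanted, one may note in passing that $P_{\max}(K)$ is in fact a Baire-class stable subset, but this is not needed for the weight computation.
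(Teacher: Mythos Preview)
Your argument is correct and matches the paper's proof essentially line for line: the barycenter map together with Choquet--Bishop--de~Leeuw gives $w(K)=nw(K)\leq nw(P_{\max}(K))$, and the inclusion $P_{\max}(K)\subset P(K)$ with $w(P(K))=w(K)$ gives the reverse. Note only a small slip in your labeling: the barycenter argument you give first establishes the \emph{first} listed inequality $w(K)\leq nw(P_{\max}(K))$, not the ``second one'' as you write.
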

\begin{proof}
Отображение $p_K\: P(K)\to K$ взятия барицентра меры непрерывно (предложение \ref{p:defs:4}(4)).
Из теоремы Шоке--Бишоп--де Леу (предложение \ref{p:defs:4}(2)) вытекает, что $p_K(P_{\max}(K))=K$. 
Следовательно, $w(K)=nw(K)\leq nw(P_{\max}(K))$. Ясно, $nw(P_{\max}(K))\leq w(P(K))=w(K)$.
\end{proof}

\begin{proposition}\label{p:wcps:2}
Пусть $K$ выпуклый компакт. Если $p_K(P_\tau(\ext K))=K$, то $w(K)=w(\ext K)$.
\end{proposition}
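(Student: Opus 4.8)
The plan is to prove the two inequalities $w(\ext K)\le w(K)$ and $w(K)\le w(\ext K)$ separately. The first one holds unconditionally and rests on the theorem of \cite{Reznichenko1990} that $w(\ext K)=nw(\ext K)$; the hypothesis $p_K(P_\tau(\ext K))=K$ is only needed for the reverse inequality, where it is fed into the barycenter map.

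For $w(\ext K)\le w(K)$ I would argue as follows. Since $\ext K$ is a subspace of the compact space $K$, the restriction of any network of $K$ to $\ext K$ is a network of $\ext K$, so $nw(\ext K)\le nw(K)$; and $nw(K)=w(K)$ by Proposition~\ref{p:defs:1}(7). Combining with $w(\ext K)=nw(\ext K)$ (\cite{Reznichenko1990}) gives $w(\ext K)\le w(K)$.

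For $w(K)\le w(\ext K)$ I would use the barycenter map. By Proposition~\ref{p:defs:2} (applied with the compact space $K$), $P_\tau(\ext K)=\{\mu\in P(K):\mu^*(\ext K)=1\}$ is a topological subspace of $P(K)$ and $w(P_\tau(\ext K))=w(\ext K)$. The map $p_K\colon P(K)\to K$ is continuous by Proposition~\ref{p:defs:4}(4), and by hypothesis its restriction to $P_\tau(\ext K)$ is a continuous surjection onto $K$. Hence $K$ is a continuous image of $P_\tau(\ext K)$, and Proposition~\ref{p:defs:1}(1),(3),(7) yields
\[
w(K)=nw(K)\le nw(P_\tau(\ext K))\le w(P_\tau(\ext K))=w(\ext K).
\]
Together with the previous paragraph this gives $w(K)=w(\ext K)$.

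The argument is short because its two substantive ingredients are imported: the equality $w(\ext K)=nw(\ext K)$ from \cite{Reznichenko1990}, which by itself carries one inequality, and the identification in Proposition~\ref{p:defs:2} of $w(P_\tau(\ext K))$ with $w(\ext K)$ together with the fact that the weak topology on $P_\tau(\ext K)$ coincides with the subspace topology from $P(K)$. So the only thing to be careful about — rather than a genuine obstacle — is to invoke these with the correct topology on $P_\tau(\ext K)$, so that the continuity of $p_K|_{P_\tau(\ext K)}$ and the weight computation are simultaneously legitimate; once that is in place the proof is the displayed chain of inequalities. (It is also worth noting, though not needed, that $P_\tau(\ext K)\subseteq P_{\max}(K)$ by Proposition~\ref{p:defs:4}(5), so the hypothesis is a natural strengthening of the Choquet--Bishop--de~Leeuw equality $p_K(P_{\max}(K))=K$ used in Proposition~\ref{p:wcps:1}.)
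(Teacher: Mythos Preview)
Your proof is correct and follows essentially the same route as the paper: continuity of $p_K$, the hypothesis giving a continuous surjection $P_\tau(\ext K)\to K$, and Banakh's identity $w(P_\tau(\ext K))=w(\ext K)$ to bound $w(K)$. One minor remark: your argument for $w(\ext K)\le w(K)$ via $nw(\ext K)=w(\ext K)$ is an unnecessary detour, since weight is monotone under subspaces, so $w(\ext K)\le w(K)$ is immediate; the paper accordingly omits this direction entirely.
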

\begin{proof}
Отображение $p_K\: P(K)\to K$ взятия барицентра меры непрерывно (предложение \ref{p:defs:4}(4)).
Из теоремы Т.Банаха (предложение \ref{p:defs:2}) вытекает, что $w(P_\tau(\ext K))=w(\ext K)$.
Следовательно, $w(K)=nw(K)\leq w(P_\tau(\ext K))$.
\end{proof}

Отметим, $P_\tau(\ext X)\subset P_{\max}(K)$ для выпуклого компакта $K$.
\begin{theorem}\label{t:wcps:1}
Пусть $K$ выпуклый компакт.
Если $P_{\max}(K)=P_\tau(\ext K)$, то $w(K)=w(\ext K)=nw(\ext K)$.
\end{theorem}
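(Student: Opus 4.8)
The plan is to combine Proposition~\ref{p:wcps:1} with Theorem~\ref{t:asd:1}(2) and the results of Section~\ref{sec:asd} on $c$-nets, applied to the boundary $\ext K$. Under the hypothesis $P_{\max}(K)=P_\tau(\ext K)$, Proposition~\ref{p:wcps:2} already gives $w(K)=w(\ext K)$ (since the barycenter map sends $P_\tau(\ext K)=P_{\max}(K)$ onto $K$ by Choquet--Bishop--de~Leeuw, Proposition~\ref{p:defs:4}(2)), so the content is really the equality $w(\ext K)=nw(\ext K)$. By Proposition~\ref{p:wcps:1} we have $w(K)=nw(P_{\max}(K))=nw(P_\tau(\ext K))$, so it suffices to show $nw(P_\tau(\ext K))\leq nw(\ext K)$, i.e. that passing to $\tau$-additive probability measures does not increase the network weight.

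First I would set $B=\ext K$ and $\tau=nw(B)$, and fix a network $\cN$ of $B$ with $|\cN|\leq\tau$. The key point is that, by Proposition~\ref{p:defs:2}, $P_\tau(B)$ embeds topologically into $P(\beta B)$ as $\{\mu\in P(\beta B):\mu^*(B)=1\}$, and the weak topology on $P_\tau(B)$ is generated by the functionals $\ph_f\colon\mu\mapsto\mu(f)$ for $f\in C^*(B)$. So I would try to build, from $\cN$, a network for $P_\tau(B)$ of cardinality $\tau^\omega$ first, by taking finite intersections of sets of the form $\{\mu:\mu(V)>q\}$ or $\{\mu:\mu(V)<q\}$ for $V$ a finite union of members of $\cN$ and $q\in\Q$; countability of the rationals and the fact that $\tau$-additive measures are determined by their values on a network-generated family should make this a network. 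Then the bound $\tau^\omega$ must be improved to $\tau$: here I would invoke the machinery of $c$-nets — specifically, since $B=\ext K$ enjoys good covering/regularity properties when $P_{\max}(K)=P_\tau(\ext K)$ (this is precisely the hypothesis that lets one control $\tau$-additive extensions), one argues as in Theorem~\ref{t:asd:1} that a $c$-net of size $\tau$ suffices and that $\nwcz$ is not inflated by the $\omega$-th power, giving $nw(P_\tau(B))\leq\nwcz(B)^\omega$-type estimates that collapse back to $\tau$.

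An alternative, cleaner route: show directly that $P_\tau(\ext K)$ is a continuous image of something with network weight $nw(\ext K)$. Since $P(X)$ is a continuous image of $X^\omega\times(\text{simplices})$-style constructions is not literally available, I would instead use that $nw(X)=nw(C_p(X))$ (Proposition~\ref{p:defs:1}(4)): $P_\tau(B)$ sits inside $C_p(C^*(B))$ or, better, the weak topology on $P_\tau(B)$ is the topology inherited from $\R^{C^*(B)}$ restricted to the functionals $\ph_f$, and one controls $|C^*(B)|$ and hence the network via Theorem~\ref{t:asd:3} and the Comfort--Hager theorem (Proposition~\ref{p:asd:ch}), $|C(B)|\leq w(\beta B)^\omega=nw(B)^\omega$ when $B$ has a small $c$-net — but again one needs the exponent to vanish. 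The honest statement is that the exponent does vanish here because $K$ is compact and $nw(K)=w(K)$, so $w(K)=nw(P_\tau(\ext K))$ is squeezed between $nw(\ext K)$ and $w(\ext K)=w(P_\tau(\ext K))$, and $nw(\ext K)\leq w(\ext K)\leq w(K)=nw(P_{\max}(K))=nw(P_\tau(\ext K))\leq nw(\ext K)$ closes the loop once the bottom inequality $nw(P_\tau(\ext K))\leq nw(\ext K)$ is established.

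The main obstacle is exactly that last inequality: proving that $nw(P_\tau(\ext K))\leq nw(\ext K)$ without an extra $\omega$-th power. The naive network-of-measures construction gives $\tau^\omega$, and one genuinely needs either the hypothesis $P_{\max}(K)=P_\tau(\ext K)$ (to know every maximal measure is $\tau$-additive over $\ext K$, so that a network on $\ext K$ really pins down the measure) or a separate argument — likely via Proposition~\ref{p:defs:1}(6), that $\ext K$ is being treated as a "$\Sigma$-like" space in the relevant range — to kill the exponent. I expect the proof to reduce to checking that $P_\tau(\ext K)$, with the hypothesis in force, has a network obtained by a countable Boolean combination over a network of $\ext K$ of size $\tau$, with the countably many combinations not multiplying the cardinality because $\tau\geq\omega$ and $\tau\cdot\omega=\tau$, the subtlety being only that one must combine countably many members of $\cN$ at once (finite unions, then the countable list of rational thresholds), which keeps the count at $\tau^{<\omega}\cdot\omega=\tau$ rather than $\tau^\omega$.
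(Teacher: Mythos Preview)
Your reduction to showing $w(K)=w(\ext K)$ via Proposition~\ref{p:wcps:2} and Choquet--Bishop--de~Leeuw is exactly what the paper does. The divergence is in the remaining equality $w(\ext K)=nw(\ext K)$: the paper does not prove it here at all. It simply cites \cite[Theorem 3.2]{Reznichenko1990}, where this equality is established for the extreme-point set of \emph{any} convex compact set, with no hypothesis on $P_{\max}(K)$ whatsoever. So the ``content'' you identify is in fact an already-known general fact, and the hypothesis $P_{\max}(K)=P_\tau(\ext K)$ plays no role in it.

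Your alternative route through $nw(P_\tau(\ext K))\leq nw(\ext K)$ has a genuine gap. As you yourself note, the naive network-of-measures construction yields only $\tau^\omega$, and none of the sketches you offer actually eliminates the exponent: the appeal to $cz$-nets and Theorem~\ref{t:asd:1} does not give $\nwcz(\ext K)\leq nw(\ext K)$ in general (that is precisely what Theorem~\ref{t:asd:1}(6) needs the Lindel\"of~$\Sigma$ hypothesis for), and the final suggestion that finite unions plus rational thresholds keep the count at $\tau^{<\omega}\cdot\omega=\tau$ is wrong --- to pin down a measure on a basic weak neighborhood you must approximate a cozero set by a \emph{countable} union from the network, which is exactly where $\tau^\omega$ enters. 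The hypothesis $P_{\max}(K)=P_\tau(\ext K)$ does not help here either: it says something about which measures on $K$ are $\tau$-additive over $\ext K$, not about the combinatorics of networks on $\ext K$. In short, you are trying to reprove a nontrivial external theorem by a route that, as written, does not close.
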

\begin{proof}
Равенство $w(\ext K)=nw(\ext K)$ было доказано в \cite[теорема 3.2]{Reznichenko1990}.
Осталось применить предложения \ref{p:wcps:2} и \ref{p:defs:4}(5).
\end{proof}

Теорема \ref{t:wcps:1} является усилением теоремы 1.2 из \cite{Spurny2010}.

\begin{proposition}\label{p:wcps:3}
Пусть $K$ выпуклый компакт. Если $\ext K\subset B\subset K$ и $B$ линделефово, то $P_{\max}(K)\subset P_\tau(B)$.
\end{proposition}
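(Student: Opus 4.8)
The statement to prove: if $K$ is a convex compactum, $\ext K\subset B\subset K$ and $B$ is Lindelöf, then $P_{\max}(K)\subset P_\tau(B)$. By Proposition \ref{p:defs:2}, $P_\tau(B)=\set{\mu\in P(K):\mu^*(B)=1}$, so the goal reduces to showing that every maximal measure $\mu$ on $K$ satisfies $\mu^*(B)=1$, equivalently $\mu_*(K\setminus B)=0$, i.e. every Borel set $A\subset K\setminus B$ has $\mu(A)=0$. Since $\mu$ is Radon, it suffices to show $\mu(C)=0$ for every compact $C\subset K\setminus B$.

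So fix a maximal measure $\mu\in P_{\max}(K)$ and a compact set $C\subset K\setminus B$. The plan is to cover $C$ by "small" compact $G_\delta$ sets disjoint from $\ext K$ and then invoke the Bishop–de Leeuw theorem (Proposition \ref{p:defs:4}(3)), which says $\mu(F)=0$ for any compact $G_\delta$ set $F\subset K\setminus\ext K$. Concretely: for each point $x\in C$ we have $x\notin B\supset\ext K$, so $x\notin\ext K$; in a metrizable-enough situation one would like a compact $G_\delta$ neighborhood of $x$ missing $\ext K$, but $K$ need not be metrizable. The right tool is that $\ext K$ is always a so-called "extreme boundary" whose complement relates to $G_\delta$ structure via continuous affine functions. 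For each $x\in C$, since $x\notin\ext K$, there is a compact $G_\delta$ set $F_x$ with $x\in F_x\subset K\setminus\ext K$ — such sets are plentiful because the non-extreme points can be separated from $\ext K$ by appropriate functional/barycenter conditions, and every point of a compact Hausdorff space has arbitrarily small closed $G_\delta$ neighborhoods when it lies in a cozero set whose complement is $\ext K$-related; more carefully, one uses that $\{x\}$ is the intersection of a sequence of closed sets avoiding $\ext K$ locally. Then $\{F_x^{\circ}\cap C\}_{x\in C}$ — or rather the relative interiors — form an open cover of $C$; by compactness extract a finite subcover, but to get a countable subcover covering $C$ with sets each of measure zero we instead use that $B$ is Lindelöf: actually we need Lindelöfness of something.

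Here is where the Lindelöf hypothesis on $B$ enters, and this is the main obstacle. The point is that $C\subset K\setminus B$ and $K\setminus B$ need not be Lindelöf, so we cannot directly extract a countable subcover of $C$ by $G_\delta$ sets avoiding $\ext K$ from an arbitrary cover. The trick is to transfer the covering problem to $B$: since the convex hull of $B$ is dense in $K$ and $B$ is Lindelöf, one shows that every compact $C\subset K\setminus B$ is contained in a cozero set $U$ of $K$ with $U\cap B=\varnothing$ (using that $B$, being Lindelöf hence "relatively well-placed", is an intersection of countably many cozero sets containing it away from the compact $C$ — this uses normality of $K$ together with the Lindelöf property of $B$ to get a countable family of cozero sets $U_n$ with $B\subset\bigcup$ complements... ). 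More precisely: for each $b\in B$ pick a cozero set $V_b\ni b$ with $\overline{V_b}\cap C=\varnothing$ (possible since $K$ is compact Hausdorff, hence normal, and $b\notin C$); by Lindelöfness of $B$ choose $(b_n)$ with $B\subset\bigcup_n V_{b_n}=:V$. Then $W=K\setminus\overline{\bigcup_{n\le k}V_{b_n}}$... this needs care, but the upshot is that $C\subset K\setminus V$ which is a compact set, and $C\subset K\setminus B$ with the complement of $B$ now "captured" inside a countable union of cozero sets. Finally, $K\setminus V=\bigcap_n(K\setminus V_{b_n})$ is a compact $G_\delta$ set (countable intersection of closed sets, in a compact space — wait, need it to be $G_\delta$: $K\setminus V_{b_n}$ is a zero set, so $K\setminus V$ is a countable intersection of zero sets, hence a zero set, hence $G_\delta$) contained in $K\setminus B\subset K\setminus\ext K$. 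By Bishop–de Leeuw (Proposition \ref{p:defs:4}(3)), $\mu(K\setminus V)=0$, and since $C\subset K\setminus V$ we get $\mu(C)=0$.

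Wrapping up: since $\mu$ is Radon, $\mu_*(K\setminus B)=\sup\{\mu(C):C\subset K\setminus B\text{ compact}\}=0$, so $\mu^*(B)=1$, and by Proposition \ref{p:defs:2} this means $\mu\in P_\tau(B)$. Thus $P_{\max}(K)\subset P_\tau(B)$. The delicate point to get right in the full write-up is the second paragraph's construction of the zero set $K\setminus V$ separating $C$ from $B$: one must genuinely use that $B$ is Lindelöf (to reduce an arbitrary cozero cover of $B$ to a countable one) and that $K$ is a compact Hausdorff space (for normality, giving the cozero sets $V_b$ with closures missing $C$, and for the fact that a countable intersection of zero sets is a zero set). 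Everything else is a direct appeal to the cited structure theorems.
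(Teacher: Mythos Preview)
Your argument is correct and, once past the false start in the second paragraph, it is essentially the paper's own proof: use Lindelöfness of $B$ to extract a countable cozero cover $(V_{b_n})$ of $B$ by sets disjoint from $C$, so that $K\setminus V$ is a zero set (hence compact $G_\delta$) contained in $K\setminus\ext K$ and containing $C$, then apply Bishop--de Leeuw. The paper phrases the cover more directly as $\gamma=\{U\text{ cozero in }K: U\subset K\setminus C\}$ rather than picking cozero neighborhoods point by point, but this is cosmetic; you should drop the exploratory first paragraph and the unneeded closure condition $\overline{V_b}\cap C=\varnothing$ (just $V_b\cap C=\varnothing$ suffices).
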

\begin{proof}
Пусть $\mu\in P_{\max}(K)$. Для доказательства $\mu\in P_\tau(B)$ нужно проверить, что если $C\subset K\setminus B$ компактно, то $\mu(C)=0$. Пусть $\ga=\{ U\subset K\setminus C: U$ ко-нуль множество в $K\}$. Тогда $\bigcup\ga=K\setminus C$. Так как $B$ линделефово, то существует $(U_n)_n\subset\ga$, такое что $B\subset U=\bigcup_n U_n$. Тогда $B\subset U \subset K\setminus C$ и $C\subset G=K\setminus U\subset K\setminus B$. Множество $G$ является нуль множеством. Из теоремы Бишопа--де Леу (предложение \ref{p:defs:4}) вытекает, что $\mu(G)=0$. Следовательно, $\mu(C)=0$.
\end{proof}

Из теоремы \ref{t:wcps:1} и предложения \ref{t:wcps:1} вытекает следующие утверждение.

\begin{theorem}[{\cite[теорема 3.3]{Reznichenko1990}}]\label{t:wcps:2}
Пусть $K$ выпуклый компакт. Если $\ext K$ линделефово, то и $w(K)=w(\ext K)=nw(\ext K)$.
\end{theorem}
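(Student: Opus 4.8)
План состоит в том, чтобы вывести утверждение прямо из теоремы~\ref{t:wcps:1} и предложения~\ref{p:wcps:3}; тогда остаётся лишь проверить условие $P_{\max}(K)=P_\tau(\ext K)$, требуемое в теореме~\ref{t:wcps:1}.

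Прежде всего я бы отметил, что включение $P_\tau(\ext K)\subset P_{\max}(K)$ выполняется для любого выпуклого компакта $K$ (предложение~\ref{p:defs:4}(5)); линделефовость $\ext K$ здесь вообще не используется.

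Для обратного включения $P_{\max}(K)\subset P_\tau(\ext K)$ я бы применил предложение~\ref{p:wcps:3} с $B=\ext K$: условие $\ext K\subset \ext K\subset K$ выполнено тривиально, а по предположению $\ext K$ линделефово, так что предложение даёт $P_{\max}(K)\subset P_\tau(\ext K)$. Объединяя оба включения, получаем $P_{\max}(K)=P_\tau(\ext K)$, и теорема~\ref{t:wcps:1} влечёт $w(K)=w(\ext K)=nw(\ext K)$.

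Существенного препятствия здесь уже не остаётся: вся содержательная работа упакована в теореме~\ref{t:wcps:1} и предложении~\ref{p:wcps:3}. В предложении~\ref{p:wcps:3} используется теорема Бишопа--де Леу вместе с линделефовостью $\ext K$, чтобы покрыть любой компакт $C\subset K\setminus\ext K$ ко-нуль множествами, охватывающими $\ext K$, и тем самым поместить $C$ в нуль множество нулевой меры для всякой максимальной меры; а в теореме~\ref{t:wcps:1} используются непрерывность и сюрьективность отображения барицентра $p_K\colon P(K)\to K$ (теорема Шоке--Бишоп--де Леу), теорема Банаха $w(P_\tau(\ext K))=w(\ext K)$ и ранее установленное равенство $w(\ext K)=nw(\ext K)$ из \cite{Reznichenko1990}.
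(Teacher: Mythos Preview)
Your argument is correct and follows exactly the paper's approach: the paper derives the theorem directly from теорема~\ref{t:wcps:1} and предложение~\ref{p:wcps:3} (the reference ``предложения~\ref{t:wcps:1}'' in the paper is a typo for~\ref{p:wcps:3}). Your explicit verification of both inclusions $P_\tau(\ext K)\subset P_{\max}(K)$ via предложение~\ref{p:defs:4}(5) and $P_{\max}(K)\subset P_\tau(\ext K)$ via предложение~\ref{p:wcps:3} simply spells out what the paper leaves implicit.
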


Из предложений \ref{p:wcps:1} и \ref{p:wcps:3}  вытекает следующие утверждение.

\begin{theorem}\label{t:wcps:3}
Пусть $K$ выпуклый компакт. Если $\ext K\subset B\subset K$ и $B$ линделефово, то $w(K)=nw(P_\tau(B))$.
\end{theorem}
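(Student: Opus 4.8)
The plan is to combine the two propositions already at hand. We are given $\ext K\subset B\subset K$ with $B$ Lindel\"of, and we want $w(K)=nw(P_\tau(B))$. First I would invoke Proposition~\ref{p:wcps:3}: since $B$ is Lindel\"of and $\ext K\subset B$, every maximal measure lies in $P_\tau(B)$, i.e. $P_{\max}(K)\subset P_\tau(B)$. Next I would record the trivial reverse-flavoured inclusion $P_\tau(B)\subset P(K)$ (valid because $K$ is compact and $B\subset K$, as noted in Section~\ref{sec:defs}), which gives $P_{\max}(K)\subset P_\tau(B)\subset P(K)$ as subspaces of the compact space $P(K)$.

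From the chain of inclusions the network weights are monotone, so $nw(P_{\max}(K))\leq nw(P_\tau(B))\leq nw(P(K))$. By Proposition~\ref{p:wcps:1} the left-hand side equals $w(K)$, and since $P(K)$ is compact with $w(P(K))=w(K)$ (Proposition~\ref{p:defs:2}, applied with the compact space $K$ itself), the right-hand side is also $w(K)$. Squeezing, $nw(P_\tau(B))=w(K)$, which is exactly the claim.

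The only point needing a little care is the embedding bookkeeping: one must check that $P_\tau(B)$, $P_{\max}(K)$ and $P(K)$ are all being regarded as subspaces of the same topological space $P(K)$ so that ``$\subset$'' really does imply monotonicity of $nw$. This is precisely what Proposition~\ref{p:defs:2} guarantees — the embeddings of $P(B)$ and $P_\tau(B)$ into $P(K)$ are topological — together with the convention $P_b(X)\subset P_b(Y)$ fixed in Section~\ref{sec:defs}. I do not anticipate any genuine obstacle; the content has been front-loaded into Propositions~\ref{p:wcps:1} and~\ref{p:wcps:3}, and this theorem is the routine synthesis of the ``$w(K)=nw(P_{\max}(K))$'' identity with the ``maximal measures concentrate on a Lindel\"of boundary'' fact.
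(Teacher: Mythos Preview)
Your proof is correct and follows exactly the route the paper takes: the paper simply states that the theorem follows from Propositions~\ref{p:wcps:1} and~\ref{p:wcps:3}, and you have spelled out the sandwich $P_{\max}(K)\subset P_\tau(B)\subset P(K)$ and the resulting squeeze on network weights that makes this work. The embedding bookkeeping you flag is indeed handled by Proposition~\ref{p:defs:2}, so there is no gap.
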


Пусть $X$ пространство. Положим
\[
zw(X) = \min\{ w(K) : X \text{ $z$-владывается в компакт }K\}.
\]
Кардинал $zw(X)$ называется \term{$z$-весом} пространства $X$. Тогда $w(P_\si(X))\leq zw(X)$ (\cite[теорема 2.4]{BanakhChigogidzeFedorchuk2003}). Из предложения \ref{t:asd:1} (2) и (3) вытекает, что $\nwcz(X)\leq zw(X)$.

\begin{proposition}\label{p:wcps:4}
Пусть $X$ пространство. Тогда $nw(P_\si(X))\leq \nwcz(X)$.
\end{proposition}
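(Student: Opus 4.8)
Let $X$ be a space. Then $nw(P_\si(X))\leq \nwcz(X)$.

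Here is my plan.

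The plan is to realize $P_\si(X)$ as (a subspace of) a continuous image of a space built directly out of a \cz-net of $X$, so that Theorem~\ref{t:asd:1}(4) applies. Recall $P_\si(X)$ consists of Radon probability measures on $\bt X$ vanishing on all $G_\de$ compacta in $\bt X\setminus X$; equivalently these correspond to $\si$-additive probability measures on the Baire $\si$-algebra of $X$, i.e. the $\si$-algebra generated by the co-zero (equivalently zero) sets of $X$. So a point of $P_\si(X)$ is determined by its values on zero sets of $X$, or by the functional $f\mapsto \int f\,d\mu$ on $C^*(X)$. Since $nw(C_p(X))=nw(X)$ by Proposition~\ref{p:defs:1}(4) and weak-$*$ topology on $P_\si(X)$ is the one inherited from $\R^{C^*(X)}$, we have $nw(P_\si(X))\leq nw(C_p(X))$ in general, but that only gives $nw(X)$, which is $\leq\nwcz(X)$ — wait, that already suffices!

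Let me reconsider: indeed $nw(P_\si(X))\leq nw(P(\bt X))\leq w(\bt X)$, which is too big. The correct cheap bound: the weak topology on $P_\si(X)$ is generated by the maps $\ph_f$, $f\in C^*(X)$, and a network for this topology can be extracted from a network for $C_p(X)$ by a standard argument — if $\mathcal S$ is a network for $X$ of size $nw(X)$, then finite sums of products of basic intervals indexed by a network of $C_p(X)$ give a network of $P_\si(X)$, of size $nw(C_p(X))=nw(X)\leq \nwcz(X)$. So the bound $nw(P_\si(X))\leq nw(X)$ holds, hence a fortiori $\leq \nwcz(X)$, and there is nothing delicate here. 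First I would state precisely that the weak topology on $P_\si(X)$ is initial with respect to $\{\ph_f : f\in C^*(X)\}$, then invoke that for any space $Z$ and any family $\cF\subset C(Z)$, the initial topology $\tau_\cF$ on $Z$ satisfies $nw(Z,\tau_\cF)\leq nw(C_p(Z))$ when $\cF$ separates points — applied with $Z=P_\si(X)$, $\cF=\{\ph_f\}$, noting $nw(\{\ph_f:f\in C^*(X)\},\tau_p(P_\si(X)))\leq nw(C_p(X))=nw(X)$.

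Actually the intended route, consistent with the section's machinery, is surely sharper-looking but lands in the same place: use Theorem~\ref{t:asd:2}, $\hds(C_p(X))\leq \nwcz(X)$, so there is $S\subset C^*(X)$ sequentially dense in $C_p(X)$ (intersect a sequentially dense subset of $C_p(X)$ with $C^*(X)$ after the standard homeomorphism $C_p(X)\cong C_p(X,(-1,1))$) with $|S|\leq\nwcz(X)$; then the countably many measures supported on finite subsets of $S$-separated points — more precisely, the maps $\ph_f$ for $f\in S$ — already generate a topology on $P_\si(X)$ coarser than the weak one but with the same convergent sequences, hence (on the compact-like $P_\si(X)$, using that it is a continuous image via $r$ of a suitable $P(\cdot)$) the same topology; so a dense subset of size $\nwcz(X)$ for this coarser topology is a network for $P_\si(X)$. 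The main obstacle is the last identification — that restricting the separating family from $C^*(X)$ down to the sequentially dense $S$ does not change the topology on $P_\si(X)$; I expect this to follow from a Rainwater–Simons-type argument (Proposition~\ref{p:defs:3}(1)) or directly from dominated convergence, since $S$ sequentially dense in $C_p(X)$ means each $f\in C^*(X)$ is a pointwise limit of a bounded (after truncation) sequence from $S$, and then $\int f\,d\mu=\lim\int f_n\,d\mu$ for every $\mu\in P_\si(X)$ by $\si$-additivity. So the proof reduces to: (i) cite Theorem~\ref{t:asd:2} to get $S$; (ii) observe $f\mapsto f$ truncation lets each $g\in C^*(X)$ be approximated pointwise-boundedly by elements of $S$; (iii) conclude $\ph_g$ is a pointwise limit of $\ph_{f_n}$ on $P_\si(X)$, so $\{\ph_f:f\in S\}$ generates the same topology; (iv) take a dense set of size $|S|$ in that (weight $\leq\nwcz(X)$) topology — it is a network, since dense in a space that is actually the whole topology. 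This gives $nw(P_\si(X))\leq|S|\leq\nwcz(X)$.
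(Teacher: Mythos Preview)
Your proposal does not work; both routes you sketch have genuine gaps.

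\textbf{The ``cheap bound'' is false.} You claim $nw(P_\si(X))\leq nw(X)$, but this fails already for the space $\dcc$ of Example~\ref{e:eq:1}. There $\bt\dcc=\bt\dc$, and every nonempty zero set of $\bt\dc$ meets $\dcc$ (pick $d_n\in U_n\cap\dc$ for a nested $G_\delta$ presentation and use that any countable subset of $\dc$ has its closure inside $\dcc$); hence there are no compact $G_\delta$ subsets of $\bt\dc\setminus\dcc$, so $P_\si(\dcc)=P(\bt\dc)$ and $nw(P_\si(\dcc))=w(\bt\dc)=2^\cc>\cc=nw(\dcc)$. Your ``standard argument'' implicitly assumes the evaluation $\mu\mapsto(\int f\,d\mu)_f$ lands in $C_p(C^*_p(X))$, i.e.\ that $f\mapsto\int f\,d\mu$ is $\tau_p$-continuous; it is not, unless $\mu$ is finitely supported.

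\textbf{The ``intended route'' only yields $iw$, not $nw$.} Given a sequentially dense $S\subset C^*(X)$ with $|S|\leq\nwcz(X)$ and bounded approximating sequences, dominated convergence does give that each $\ph_g$ is a pointwise (sequential) limit of $\ph_{f_n}$, $f_n\in S$. This shows the map $P_\si(X)\to\R^S$ is a continuous injection, hence $iw(P_\si(X))\leq\nwcz(X)$. It does \emph{not} show the $S$-generated topology coincides with the weak topology: that would require each $\ph_g$ to be continuous for the $S$-topology, and a pointwise limit of continuous functions need not be continuous. Your step ``same convergent sequences $\Rightarrow$ same topology'' would need $P_\si(X)$ to be sequential, which there is no reason to expect. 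And in step (iv), a dense set is not a network.

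\textbf{The paper's proof is direct and different.} Take a \cz-net $\cN$ of $X$ of size $\nwcz(X)$, closed under finite unions. A subbase for $P_\si(X)$ is given by $W(U,\ep)=\{\mu:\mu(U)>\ep\}$ for co-zero $U$. One checks that $\{W(F,c):F\in\cN,\ c\in\Q\cap(0,1)\}$ is a network: if $\mu\in W(U,\ep)$, write $U=\bigcup_n F_n$ with $F_n\in\cN$, set $P_m=\bigcup_{i\leq m}F_i$, use $\si$-additivity to get $\mu(P_m)>c$ for some rational $c\in(\ep,\mu(U))$, and then $\mu\in W(P_m,c)\subset W(U,\ep)$. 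This exploits the \cz-net property exactly once and avoids any topology-comparison argument on $P_\si(X)$.
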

\begin{proof}
Пусть $\cN$ есть \cz-сеть пространства $X$ и $|\cN|\leq \nwcz(X)$. Можно считать, что $\cN$ состоит из замкнутых множеств и семейство $\cN$ замкнуто относительно конечных пересечений, то есть $\bigcup\cL\in \cN$ для конечного $\cL\subset\cN$. 
Положим $\cR=\{ W(F,\ep) : F\in \cN,\ \ep\in (0,1)\cap \Q  \}$, где
\[
W(F,\ep) = \{ \mu \in P_\si(X) : \mu(F)>\ep \}.
\]
Предбазу топологии $P_\si(X)$ образуют множества вида $W(U,\ep)$ для ко-нуль множества $U\subset X$ и $\ep>0$ \cite[4.3]{Bogachev2016book}. Покажем, что $\cR$ есть сеть пространства $P_\si(X)$. 
Пусть $\mu\in W(F,\ep)$. 
Пусть $c\in \Q\cap(\ep,\mu(U))$.
Так как $\cN$ \cz-сеть $X$, то существует последовательность $(F_n)_n\subset \cN$, такая что $\bigcup_n F_n=U$. Положим $P_n=\bigcup_{i\leq n} F_i$. Тогда $\mu(U)=\lim_{n\to\infty}\mu(P_n)$ и, следовательно, $\mu(P_m)>c$ для некоторого $m$. Получаем, $\mu\in W(P_m,c)\subset W(U,\ep)$ и $W(P_m,c)\in\cR$.
\end{proof}

Из теоремы \ref{p:wccs:1}(7) и предложения \ref{p:wcps:4} вытекает следующие утверждение.

\begin{cor}\label{c:wcps:1}
Пусть $X$ пространство. 
Если $X$ со счетной сетью, то $P_\si(X)$ со счетной сетью.
\end{cor}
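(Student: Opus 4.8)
The plan is to obtain this corollary as the composition, with $\tau=\omega$, of the two results immediately preceding it, so that essentially no new argument is required. The first step is to apply Theorem~\ref{t:asd:1}(7): if $nw(X)=\om$, then $\nwcz(X)=\omega$, i.e.\ a cosmic space admits a countable \cz-net. This step is itself almost free, since a countable net is automatically a \cz-net: every co-zero subset of a space with a countable net is Lindel\"of, hence a countable union of members of the net.

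The second step is to invoke Proposition~\ref{p:wcps:4}, which gives $nw(P_\si(X))\leq\nwcz(X)$. Substituting $\nwcz(X)=\omega$ from the first step yields $nw(P_\si(X))\leq\omega$, that is, $P_\si(X)$ has a countable net, which is exactly the assertion.

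The only place where any care is needed is inside Proposition~\ref{p:wcps:4}, and it is already handled there: starting from a countable \cz-net $\cN$, one passes to its closure under finite unions (still countable), forms the countable family $\{\,W(F,\ep):F\in\cN,\ \ep\in(0,1)\cap\Q\,\}$, and checks that each subbasic open set $W(U,\ep)$ with $U$ co-zero is a union of members of this family, using countable additivity of the measure $\mu\in P_\si(X)$ to approximate $\mu(U)$ from inside along a countable exhaustion of $U$ by elements of $\cN$. I do not expect a genuine obstacle here; the one thing to watch is the cardinality bookkeeping, namely that every auxiliary family remains countable, which the finite-union closure of a countable family guarantees.
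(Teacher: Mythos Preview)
Your proposal is correct and coincides with the paper's own proof: the corollary is obtained by combining Theorem~\ref{t:asd:1}(7) (countable net implies countable \cz-net) with Proposition~\ref{p:wcps:4} ($nw(P_\si(X))\leq\nwcz(X)$), exactly as you describe. Your additional remarks about the inner workings of these two ingredients are accurate but not needed for the corollary itself.
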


Из теоремы \ref{t:wcps:3} и предложения \ref{p:wcps:4} вытекает следующие утверждение.

\begin{cor}\label{c:wcps:2}
Пусть $K$ выпуклый компакт. Если $\ext K\subset B\subset K$ и $B$ линделефово, то $w(K)\leq \nwcz(B)$.
\end{cor}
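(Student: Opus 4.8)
If $K$ is a convex compact set, $\ext K \subset B \subset K$, and $B$ is Lindelöf, then $w(K) \le \nwcz(B)$.

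The plan is to chain together Theorem \ref{t:wcps:3} and Proposition \ref{p:wcps:4}, exactly as the sentence preceding the corollary promises. First I would invoke Theorem \ref{t:wcps:3}: under the hypotheses $\ext K \subset B \subset K$ with $B$ Lindelöf, it gives $w(K) = nw(P_\tau(B))$. So it suffices to bound $nw(P_\tau(B))$ by $\nwcz(B)$. The point I need to make is that in this setting $P_\tau(B)$ coincides with $P_\si(B)$, or at least embeds into it suitably, so that Proposition \ref{p:wcps:4} — which gives $nw(P_\si(X)) \le \nwcz(X)$ for any space $X$ — applies with $X = B$.

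The key step is therefore to identify $P_\tau(B)$ with $P_\si(B)$. Here I would use that $B$ is Lindelöf: for a Lindelöf space every $\tau$-additive Radon measure is $\sigma$-additive on the Baire $\sigma$-algebra and conversely, and more precisely a compact $G_\delta$ subset of $\bt B \setminus B$ would, together with $B$, form an open cover of... no — the cleaner route is: $B$ Lindelöf implies (as in the proof of Proposition \ref{p:wcps:3}) that any compact $C \subset \bt B \setminus B$ is separated from $B$ by a co-zero set, hence $C$ sits inside a zero set disjoint from $B$; and a $\tau$-additive (equivalently, by Proposition \ref{p:defs:2}, a measure $\mu \in P(\bt B)$ with $\mu^*(B) = 1$) must give such a set measure zero. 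Thus $P_\tau(B) \subseteq P_\si(B)$, and the reverse inclusion is automatic from the definitions, giving $P_\tau(B) = P_\si(B)$ as topological spaces (both carry the weak topology inherited from $P(\bt B)$).

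With that identification in hand the argument closes immediately:
\[
w(K) = nw(P_\tau(B)) = nw(P_\si(B)) \le \nwcz(B),
\]
the last inequality being precisely Proposition \ref{p:wcps:4} applied to $X = B$. The main obstacle — really the only non-bookkeeping point — is verifying $P_\tau(B) = P_\si(B)$ for Lindelöf $B$; but this is essentially a repetition of the covering argument already carried out in the proof of Proposition \ref{p:wcps:3} (pass to a countable subcover of co-zero sets covering $B$, take complements, apply the relevant vanishing-on-$G_\delta$-compacta property), so no genuinely new idea is required. Everything else is a direct substitution of previously established results.
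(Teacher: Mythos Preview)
Your approach matches the paper's: chain Theorem~\ref{t:wcps:3} with Proposition~\ref{p:wcps:4}, bridging via the relation between $P_\tau(B)$ and $P_\si(B)$. The final chain $w(K)=nw(P_\tau(B))\le nw(P_\si(B))\le\nwcz(B)$ is correct.

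One confusion is worth flagging, though it does not break the argument. You only need the inclusion $P_\tau(B)\subset P_\si(B)$, and this holds for \emph{every} space $B$, with no Lindel\"of hypothesis: if $\mu^*(B)=1$ in $\bt B$ then $\mu$ vanishes on every Borel subset of $\bt B\setminus B$, in particular on every compact $G_\delta$ there. Your covering argument (each compact $C\subset\bt B\setminus B$ lies inside a zero set disjoint from $B$) is really what one uses to prove the \emph{reverse} inclusion $P_\si(B)\subset P_\tau(B)$ under Lindel\"ofness --- a zero set in $\bt B$ is a compact $G_\delta$, so a measure in $P_\si(B)$ kills it and hence kills $C$. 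Thus you have the two directions swapped: the inclusion you attribute to Lindel\"ofness is automatic, and the one you call ``automatic from the definitions'' is the one that actually needs the Lindel\"of hypothesis. Since only $P_\tau(B)\subset P_\si(B)$ is required for $nw(P_\tau(B))\le nw(P_\si(B))$, the proof survives once this is straightened out; in fact the bridging step is simpler than you suggest.
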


В следующем разделе это следствие будет значительно усилено (теорема \ref{t:wccs:2}).

%%% end file('ru//sec/wcps.tex') %%%

\section{Вес выпуклых компактов и пространство аффинных функций}
\label{sec:wccs}
%%% begin file('ru//sec/wccs.tex') %%%

Пространство $X$ называется \term{$k$-пространством} если для $F\subset X$, $F$ замкнуто в $X$ если и только если $F\cap K$ замкнуто в $X$ для каждого компакта $K\subset X$. Любое метризуемое пространство является $k$-пространством. Пространство $X$ называется \term{$k_\R$-пространством} если для  $f\: X\to \R$,  функция $f$ непрерывна если и только если для каждого компакта $K\subset X$ ограничение $\restr fK$ функции $f$ на $K$ непрерывно. Любое $k$-пространство является $k_\R$-пространством.

Непосредственно проверяется следующие утверждение.
\begin{lemma}\label{l:wccs:1}
Пусть $X$ $k_\R$-пространство. Отображение пространств $f\:X\to Y$ непрерывно если и только если для каждого компакта $K\subset X$ ограничение $\restr fK$ отображения $f$ на $K$ непрерывно.
\end{lemma}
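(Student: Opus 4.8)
The plan is to prove only the nontrivial implication, since the forward direction is immediate: the restriction of a continuous map to any subspace is continuous. So assume $f\: X\to Y$ has the property that $\restr fK$ is continuous for every compact $K\subset X$, and aim to show that $f$ itself is continuous.

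The key idea is to reduce continuity of $f\: X\to Y$ to continuity of real-valued compositions. Since every space in the paper is Tychonoff, the topology of $Y$ is the initial topology induced by the family $C(Y)$; equivalently, the evaluation map embeds $Y$ into $\R^{C(Y)}$. Hence $f$ is continuous if and only if $g\circ f\: X\to\R$ is continuous for every $g\in C(Y)$ (one direction is the composition of continuous maps, the other uses that a map into a product is continuous iff all its coordinates are, together with the fact that $Y$ carries the subspace topology from $\R^{C(Y)}$).

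With this reduction in hand, fix $g\in C(Y)$ and set $h=g\circ f\: X\to\R$. For every compact $K\subset X$ one has $\restr hK=g\circ\restr fK$, which is continuous, being the composition of the continuous maps $\restr fK\: K\to Y$ (continuous by hypothesis) and $g\: Y\to\R$. Since $X$ is a $k_\R$-space, this forces $h$ to be continuous. As $g\in C(Y)$ was arbitrary, $f$ is continuous. There is no real obstacle here: the only point worth isolating is the Tychonoff reduction to real-valued maps, after which the $k_\R$ hypothesis applies verbatim, which is presumably why the paper calls the verification immediate.
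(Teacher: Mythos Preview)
Your argument is correct: the Tychonoff assumption lets you reduce to real-valued compositions, and then the $k_\R$ hypothesis applies directly. The paper itself gives no proof of this lemma, merely stating that it is verified directly, so your proposal is a faithful unpacking of that remark.
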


\begin{lemma}\label{l:wccs:2}
Пусть $X$ пространство и $S\subset C_p(X)$ разделяет точки $X$. Тогда $iw(X)\leq|S|$.
Если $X$ компактно, то $w(X)\leq|S|$. Если $Q\subset S$ плотно в $S$, то $Q$ разделяет точки $X$.
\end{lemma}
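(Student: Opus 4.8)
The plan is to realize $X$ inside a product of real lines by means of the evaluation map and to read off all three assertions from this realization.

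First I would define $e\: X\to\R^S$ by $e(x)=(f(x))_{f\in S}$. Each coordinate function $f\in S$ is continuous, so $e$ is continuous; and since $S$ separates points of $X$, the map $e$ is injective. Hence $e$ is a condensation (уплотнение) of $X$ onto the subspace $e(X)\subset\R^S$. Since $w(\R^S)\leq|S|+\om$, this gives $w(e(X))\leq|S|+\om$, and therefore $iw(X)\leq|S|+\om$ — which is the first assertion, recalling that the definition of $iw$ carries the summand $\om$. If in addition $X$ is compact, then $e$ is a continuous bijection of a compact space onto the Hausdorff space $e(X)$, hence a homeomorphism, so $w(X)=w(e(X))\leq|S|+\om$; this is the second assertion.

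For the third assertion I would argue directly. Assume $Q\subset S$ is dense in $S$ (in the topology $S$ inherits from $C_p(X)$), and let $x\neq y$ be points of $X$. Choose $f\in S$ with $\ep\eqdef|f(x)-f(y)|>0$. The set $V=\{g\in C_p(X):|g(x)-f(x)|<\ep/3\text{ and }|g(y)-f(y)|<\ep/3\}$ is an open neighbourhood of $f$ in $C_p(X)$, so $V\cap S$ is a non-empty open subset of $S$; by density of $Q$ there is some $g\in Q\cap V$. Then $|g(x)-g(y)|\geq|f(x)-f(y)|-|g(x)-f(x)|-|g(y)-f(y)|>\ep/3>0$, so $g(x)\neq g(y)$. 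Thus $Q$ separates the points of $X$.

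I do not expect a genuine obstacle in this lemma; the only places that need a little care are the bookkeeping of the additive $\om$ in the definitions of $iw$ and $w$ (so that the inequality ``$\leq|S|$'' must be read as ``$\leq|S|+\om$''), and, in the last part, the correct interpretation of ``$Q$ dense in $S$'' as density with respect to the subspace topology of $C_p(X)$ on $S$, that is, the topology of pointwise convergence.
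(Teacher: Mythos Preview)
Your proof is correct and follows essentially the same approach as the paper: the evaluation map into $\R^S$ gives the condensation bound on $iw(X)$, compactness upgrades this to $w(X)$ (the paper cites $w(X)=iw(X)$ for compact $X$ rather than invoking the homeomorphism directly), and the density argument for $Q$ is the same, the paper using $\ep/2$ where you use $\ep/3$.
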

\begin{proof}
Так как $S$ разделяет точки $X$, то $X$ непрерывно и иньективно отображается в $\R^S$
\cite[Предложение 0.5.4]{Arhangelskii1989CpBook}. Если $X$ компактно, то $w(X)=iw(X)$ (предложение \ref{p:defs:1}). Покажем, что $Q$ разделяет точки $X$. Пусть $x,y\in X$ и $x\neq y$. Тогда $f(x)\neq f(y)$ для некоторого $f\in S$. Пусть $\ep=\frac{|f(x)-f(y)|}2$. Множество $U=\{g\in C_p(X): |f(x)-g(x)|<\ep$ и $|f(y)-g(y)|<\ep\}$ является открытой окрестностью функции $f$ в $C_p(X)$. Пусть $g\in U\cap Q$. Тогда $g(x)\neq g(y)$.
\end{proof}

\begin{proposition}\label{p:wccs:1}
Пусть $K$ выпуклый компакт, $B\subset K$ граница $K$, $A_1=(\afb K,\tau_p(B))$, $A_1'=(\afb K,\tau_p(K))$  и $\tau$ кардинал.
Следующие условия эквивалентны:
\begin{enumerate}
\item
$w(K)\leq \tau$;
\item
$d(A_1')\leq \tau$;
\item
$\ds(A_1)\leq \tau$;
\item
$A_1$ является непрерывным образом метризуемого пространства $X$, для которого $d(X)\leq\tau$;
\item
$A_1$ является непрерывным образом $k_\R$-пространства $X$, для которого $d(X)\leq\tau$.
\end{enumerate}
\end{proposition}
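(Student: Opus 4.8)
The plan is to derive everything from one remark about the sup-norm. The set $\afb K$ carries, besides the two pointwise topologies $\tau_p(B)\subset\tau_p(K)$, the norm topology of $(C(K),\nmc)$, which refines both; and $d((\afb K,\nmc))=d((\af K,\nmc))\leq d((C(K),\nmc))\leq w(K)$, the last inequality being the standard fact that the norm-density of $C(K)$ for compact $K$ does not exceed $w(K)$ (by Stone--Weierstrass the $\Q$-algebra generated by a point-separating family of $w(K)$ continuous functions is $\nmc$-dense). So, assuming $(1)$, fix $D\subset\afb K$ with $|D|\leq\tau$ that is $\nmc$-dense in $\afb K$. Since $\nmc$ refines both pointwise topologies, $D$ is dense in $A_1$ and in $A_1'$, which gives $(2)$; and since a $\nmc$-convergent sequence is $\tau_p(B)$-convergent, $D$ is in fact \emph{sequentially} dense in $A_1$, which gives $(3)$. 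For $(4)$ take $X=(\afb K,\nmc)$ itself: it is metrizable with $d(X)\leq\tau$, and the identity map $X\to A_1$ is a continuous surjection because $\nmc$ refines $\tau_p(B)$. Finally $(4)\Rightarrow(5)$ is immediate, since a metrizable space is a $k$-space, hence a $k_\R$-space.

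For the converse it suffices to prove $(3)\Rightarrow(2)$, $(5)\Rightarrow(2)$, and $(2)\Rightarrow(1)$; this closes the circle. The implication $(3)\Rightarrow(2)$ is just Rainwater--Simons (Proposition \ref{p:defs:3}(1)): a sequence in $\afb K$ converging in $\tau_p(B)$ also converges in $\tau_p(K)$, so a sequentially dense subset of $A_1$ is sequentially dense, in particular dense, in $A_1'$.

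The implication $(5)\Rightarrow(2)$ is where the work lies. Let $q\colon X\to A_1$ be a continuous surjection from a $k_\R$-space $X$ with $d(X)\leq\tau$. It is enough to show that $q$, viewed as a map into $A_1'$ (same underlying function, finer target topology $\tau_p(K)$), is still continuous; the image of a dense set of size $\leq\tau$ is then dense in $A_1'$ and $(2)$ holds. By Lemma \ref{l:wccs:1} it suffices to check that $q|_C\colon C\to A_1'$ is continuous for each compact $C\subset X$. Fix such a $C$. Then $q(C)$ is compact in $A_1=(\afb K,\tau_p(B))$, hence compact in $A_1'=(\afb K,\tau_p(K))$ by Pfitzner's theorem (Proposition \ref{p:defs:3}(2)). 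Now $\tau_p(K)$ refines $\tau_p(B)$ and both restrict to compact Hausdorff topologies on $q(C)$; since a continuous bijection from a compact space onto a Hausdorff space is a homeomorphism, the two restrictions coincide on $q(C)$. Hence $q|_C$ is continuous into $q(C)$ no matter which of the two topologies $q(C)$ carries, so $q|_C\colon C\to A_1'$ is continuous. For $(2)\Rightarrow(1)$, first note that $\afb K$ separates the points of $K$: given $x\neq y$ in $K$, Hahn--Banach yields a continuous linear functional separating them, and rescaling puts it into $\afb K$. Given $D\subset\afb K$ dense in $A_1'$ with $|D|\leq\tau$, the last clause of Lemma \ref{l:wccs:2} shows $D$ still separates the points of $K$; applying the compact case of Lemma \ref{l:wccs:2} to $D\subset C_p(K)$ gives $w(K)\leq|D|\leq\tau$.

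The crux is the passage back from the coarse boundary topology $\tau_p(B)$ to the full topology $\tau_p(K)$, i.e.\ the step $(5)\Rightarrow(2)$, and this is precisely where Pfitzner's theorem (and, for the sequential version $(3)\Rightarrow(2)$, Rainwater--Simons) is indispensable; everything else is routine bookkeeping around the norm topology of $\afb K$ and the elementary separation lemma.
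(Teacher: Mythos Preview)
Your proof is correct and follows essentially the same route as the paper's: the forward implications go through the norm topology on $\afb K$ (the paper also takes $X=(\afb K,\nmc)$ for $(1)\Rightarrow(4)$, though it handles $(1)\Rightarrow(2)$ via $nw(C_p(K))=w(K)$ rather than Stone--Weierstrass), while the backward steps $(3)\Rightarrow(2)$, $(5)\Rightarrow(2)$, $(2)\Rightarrow(1)$ use Rainwater--Simons, Pfitzner plus Lemma~\ref{l:wccs:1}, and Lemma~\ref{l:wccs:2} respectively, exactly as in the paper. The only difference is organizational: you derive $(2)$, $(3)$, $(4)$ from $(1)$ in one stroke, whereas the paper chains $(1)\Rightarrow(4)\Rightarrow(3)\Rightarrow(2)$.
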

\begin{proof}
$(1)\rarr(2)$ Пространство $A_1'$ является подпространством пространства $C_p(K)$.
Тогда $nw(C_p(K))=nw(K)=w(K)\leq\tau$ (Предложение \ref{p:defs:1}). Следовательно, $d(A_1')\leq nw(A_1')\leq\tau$.

$(2)\rarr(1)$ Пусть $S\subset A_1'=\cl S$ и $|S|\leq\tau$. 
Так как $\af K$ разделяет точки $K$, то $\afb K$ тоже разделяет точки $K$. Из леммы \ref{l:wccs:2} вытекает $w(K)\leq |S|\leq\tau$.

$(1)\rarr(4)$ Из (1) вытекает, что $d((\af K,\nmc))=w((\af K,\nmc))\leq\tau$. Следовательно,
$d(X)\leq\tau$, где $X=(\afb K,\nmc)$. Пространство $X$ метризуемо и непрерывно отображается на $A_1$.

$(4)\rarr(3)$ Пусть $f\: X\to A_1$ непрерывное сюрьективное отображение. Пусть $S\subset X=\cl S$ и $|S|\leq\tau$. Тогда $S$ секвенциально плотно в $X$ и, следовательно, $f(S)$ секвенциально плотно в $A_1$. Кроме того, $|f(S)|\leq|S|\leq\tau$.

$(3)\rarr(2)$ Пусть $S\subset \af K$ секвенциально плотно в $A_1$ и $|S|\leq\tau$. Из теоремы Рейнуотер--Симонс (предложение \ref{p:defs:3}(1)) вытекает, что $S$ секвенциально плотно в $A_1'$.

$(4)\rarr(5)$ Метризуемое пространство является $k_\R$-пространством.

$(5)\rarr(2)$ Пусть $f\: X\to A_1$ непрерывное сюрьюктивное отображение. Пусть $C\subset X$ компактно. Тогда $f(C)$ компактно. Из теоремы Пфицнера (предложение \ref{p:defs:3}(2)) вытекает, что $f(C)$ компактно в топологии $A_1'$ и имеет ту же топологию, что $f(C)$ в топологии $A_1$. Следовательно, $\restr fC\: C\to A_1'$ непрерывно. Из леммы \ref{l:wccs:1} вытекает, что $f\: X\to A_1'$ непрерывно. Тогда $d(A_1')\leq d(X)\leq\tau$.
\end{proof}

Отметим, что в пункте (3) нельзя заменить $\ds(A_1)\leq \tau$ на $d(A_1)\leq \tau$. Существует выпуклый неметризуемый компакт $K$, для которого $(\af K,\tau_p(\ext K))$ сепарабельно \cite[Example 4.6]{MoorsReznichenko2008}.

Из предложения \ref{p:wccs:1} вытекает следующие утверждение.

\begin{theorem}\label{t:wccs:1}
Пусть $K$ выпуклый компакт и $B\subset K$ граница $K$.
Тогда $w(K)=\ds((\afb K,\tau_p(B)))$.
\end{theorem}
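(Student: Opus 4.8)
The plan is to read the theorem off Proposition~\ref{p:wccs:1}, which has already been established. Write $A_1=(\afb K,\tau_p(B))$, as there. The equivalence of conditions (1) and (3) of that proposition says exactly that, for every cardinal $\tau$, one has $w(K)\le\tau$ if and only if $\ds(A_1)\le\tau$. Instantiating $\tau:=\ds(A_1)$ — for which $\ds(A_1)\le\tau$ holds trivially — gives $w(K)\le\ds(A_1)$; instantiating $\tau:=w(K)$ gives $\ds(A_1)\le w(K)$. Hence $w(K)=\ds(A_1)$, which is the assertion. Since $w(K)$ and $\ds(A_1)$ are both $\ge\om$ by definition, there is nothing further to verify about small cardinals.

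Consequently I expect no real obstacle in the theorem itself: all the substance sits inside Proposition~\ref{p:wccs:1}. There the cycle $(1)\Rightarrow(4)\Rightarrow(3)\Rightarrow(2)\Rightarrow(1)$ carries the argument, with pivotal steps: $(1)\Rightarrow(4)$, which uses that $(\afb K,\nmc)$ is a metrizable space of density $d((\af K,\nmc))=w((\af K,\nmc))=w(K)$ and maps continuously onto $A_1$ (the equality $w((\af K,\nmc))=w(K)$ coming from the fact that $\af K$ separates points of the compactum $K$, so any sup-norm-dense subset still separates points and thereby bounds $iw(K)=w(K)$, while $\af K\subseteq C(K)$ yields the reverse estimate); $(3)\Rightarrow(2)$, which is precisely the Rainwater--Simons theorem (Proposition~\ref{p:defs:3}(1)), upgrading pointwise convergence on the boundary $B$ to pointwise convergence on all of $K$ for bounded affine sequences, so that sequential density transfers from $\tau_p(B)$ to $\tau_p(K)$; and $(2)\Rightarrow(1)$, which is Lemma~\ref{l:wccs:2}, bounding $w(K)=iw(K)$ by the cardinality of a point-separating family in $C_p(K)$ and noting that a dense subset of such a family still separates points.

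If one wanted to reprove the proposition rather than cite it, the subtlest point would be the identification in $(1)\Rightarrow(4)$ that makes $w((\af K,\nmc))=w(K)$, together with checking that the identity map from $(\afb K,\nmc)$ to $A_1$ is continuous; the parallel $k_\R$-route $(4)\Rightarrow(5)\Rightarrow(2)$ would in addition invoke Pfitzner's theorem (Proposition~\ref{p:defs:3}(2)) to see that the continuous image of a compact set is compact with the same topology in $\tau_p(B)$ and $\tau_p(K)$, so that Lemma~\ref{l:wccs:1} applies. But for the theorem as stated these are already done, and the proof reduces to the two instantiations of the biconditional above.
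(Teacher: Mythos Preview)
Your proof is correct and takes exactly the same approach as the paper: the theorem is stated there as an immediate consequence of Proposition~\ref{p:wccs:1}, with no further argument given, and your instantiation of the $(1)\Leftrightarrow(3)$ biconditional at $\tau=w(K)$ and $\tau=\ds(A_1)$ is precisely the unpacking of that citation.
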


\begin{theorem}\label{t:wccs:1+1}
Пусть $K$ выпуклый компакт и $B\subset K$ граница $K$.
Тогда $w(K)\leq\hds(C_p(B))$.
\end{theorem}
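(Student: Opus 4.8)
The plan is to combine Theorem \ref{t:wccs:1} with a comparison of two function spaces over the boundary $B$. By Theorem \ref{t:wccs:1} we have $w(K)=\ds((\afb K,\tau_p(B)))$, so it suffices to show $\ds((\afb K,\tau_p(B)))\leq \hds(C_p(B))$. The natural bridge is the restriction map $\pi_B\: \af K\to C(B)$, $f\mapsto\restr fB$. Since the convex hull of the boundary $B$ is dense in $K$, the space $(\af K,\tau_p(B))$ is Hausdorff and $\pi_B$ is a homeomorphic embedding of $(\af K,\tau_p(B))$ onto its image in $C_p(K|B)\subset C_p(B)$ (as recalled in Section \ref{sec:defs}, following \cite{MoorsReznichenko2008}). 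Under this embedding $(\afb K,\tau_p(B))$ is carried homeomorphically onto the subspace $M=\pi_B(\afb K)\subset C_p(B)$.

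First I would set $M=\pi_B(\afb K)\subset C_p(B)$ and observe, via the embedding just described, that $\ds((\afb K,\tau_p(B)))=\ds(M)$. Next, since $M$ is a subspace of $C_p(B)$, the definition of $\hds$ gives immediately $\ds(M)\leq\hds(C_p(B))$. Chaining these with Theorem \ref{t:wccs:1} yields $w(K)=\ds((\afb K,\tau_p(B)))=\ds(M)\leq\hds(C_p(B))$, which is the claim.

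The only point requiring care — and the main (though modest) obstacle — is making sure that the map $\pi_B$ restricted to $\afb K$ is genuinely a homeomorphism onto $M$ with respect to the relevant topologies: on the domain side $(\afb K,\tau_p(B))$ carries the topology of pointwise convergence on $B$, and on the target side $M$ carries the subspace topology from $C_p(B)$, i.e. pointwise convergence on $B$. These match by construction of $\tau_p(B)$, since the evaluation functionals $\de_b$ for $b\in B$ generate both topologies; injectivity of $\pi_B$ on $\af K$ (hence on $\afb K$) follows from the density of $\overline{\operatorname{conv}} B=K$, because an affine continuous function vanishing on $B$ vanishes on its convex hull and hence on $K$. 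With $\pi_B$ a homeomorphic embedding, sequential density is preserved in both directions: a sequentially dense $S\subset\afb K$ maps to a sequentially dense $\pi_B(S)\subset M$ of the same cardinality, and conversely. This gives $\ds((\afb K,\tau_p(B)))=\ds(M)$ and completes the argument.
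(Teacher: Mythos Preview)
Your proof is correct and follows essentially the same route as the paper: invoke Theorem \ref{t:wccs:1} to get $w(K)=\ds((\afb K,\tau_p(B)))$, identify $(\afb K,\tau_p(B))$ with a subspace of $C_p(B)$ via the restriction embedding, and bound $\ds$ by $\hds(C_p(B))$. The paper compresses this into two lines by simply citing the embedding from Section~\ref{sec:defs}, whereas you spell out why $\pi_B$ is a homeomorphism onto its image; the argument is otherwise identical.
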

\begin{proof}
Пространство $A_1=(\afb K,\tau_p(B))$ является подпространством пространства $C_p(B)$.
Тогда из теоремы \ref{t:wccs:1} вытекает $w(K)= \ds(A_1)\leq \hds(C_p(B))$.
\end{proof}

\begin{theorem}\label{t:wccs:fu}
Пусть $K$ выпуклый компакт и $B\subset K$ граница $K$.
Если $C_p(B)$ пространство Фреше--Урысона, то $w(K)\leq hl^*(X)$.
\end{theorem}
\begin{proof}
Так как $hd(C_p(X))=hl^*(X)$ (предложение \ref{p:defs:1}), то из предложения \ref{p:asd:fu} вытекает $\hds(C_p(X))\leq hl^*(X)$. Из  теоремы \ref{t:wccs:1+1} вытекает $w(K)\leq hl^*(X)$.
\end{proof}

\begin{theorem}\label{t:wccs:2}
Пусть $K$ выпуклый компакт и $B\subset K$ граница $K$.
Тогда $w(K)\leq \nwcz(B)$.
\end{theorem}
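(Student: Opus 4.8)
The plan is to obtain the inequality as a one-line composition of two reductions already carried out in the paper. By Theorem~\ref{t:wccs:1+1} one has $w(K)\leq\hds(C_p(B))$ — this is the step that encodes the convexity: $w(K)$ equals $\ds$ of the unit ball $(\afb K,\tau_p(B))$ by Theorem~\ref{t:wccs:1}, and that ball is a subspace of $C_p(B)$, so its $\ds$ is bounded by $\hds(C_p(B))$. By Theorem~\ref{t:asd:2}, applied to the (Tychonoff) space $X=B$, one has $\hds(C_p(B))\leq\nwcz(B)$. Combining,
\[
w(K)\leq\hds(C_p(B))\leq\nwcz(B),
\]
which is exactly the claim. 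So I would simply invoke Theorems~\ref{t:wccs:1+1} and \ref{t:asd:2} and chain the two estimates.

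I do not expect any obstacle at the level of this statement, since both ingredients are available; the genuine work sits inside those two theorems. For Theorem~\ref{t:wccs:1+1} it is the Rainwater--Simons theorem (Proposition~\ref{p:defs:3}(1)) that a set sequentially dense in $(\afb K,\tau_p(B))$ stays sequentially dense in $(\afb K,\tau_p(K))$, together with Lemma~\ref{l:wccs:2} turning a small such set into a base of $K$. For Theorem~\ref{t:asd:2} the point is the characterization of $cz$-nets in Proposition~\ref{p:asd:1}: from a $cz$-net $\cN$ of size $\tau=\nwcz(B)$ one builds, on any $M\subset C_p(B)$, the coarser topology generated by the $\sup$- and $\inf$-functionals over the members of $\cN$, which has weight $\leq\tau$, picks a $\tau$-sized dense subset there, and verifies — using the oscillation control supplied by condition $(F_{cz})$ — that it is sequentially dense in the pointwise topology. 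If one insisted on pointing to a single delicate step, it would be that last verification; but since Theorem~\ref{t:asd:2} may be assumed here, nothing beyond the displayed two-step inequality is needed.
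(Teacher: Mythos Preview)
Your proof is correct and matches the paper's own argument exactly: the paper also derives the inequality by combining $w(K)\leq\hds(C_p(B))$ from Theorem~\ref{t:wccs:1+1} with $\hds(C_p(B))\leq\nwcz(B)$ from Theorem~\ref{t:asd:2}. Your supplementary remarks on what underlies those two results are accurate as well.
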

\begin{proof}
Так как $\hds(C_p(B))\leq \nwcz(B)$ (теорема \ref{t:asd:2}), то из теоремы \ref{t:wccs:1+1} вытекает, что $w(K)\leq \nwcz(B)$.
\end{proof}

Из теорем \ref{t:asd:1}(7) и \ref{t:wccs:2} вытекает следующие утверждение.
\begin{theorem}\label{t:wccs:2+1}
Пусть $K$ выпуклый компакт и $B\subset K$ граница $K$.
Если $B$ со счетной сетью, то $K$ метризуемо.
\end{theorem}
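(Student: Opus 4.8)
План состоит в том, чтобы свести метризуемость $K$ к оценке веса через уже построенный кардинальный инвариант $\nwcz$. Теорема \ref{t:wccs:2} даёт неравенство $w(K)\leq \nwcz(B)$, поэтому достаточно показать, что $\nwcz(B)=\om$. Для этого я применю теорему \ref{t:asd:1}(7): если $nw(B)=\om$, то $\nwcz(B)=\om$. По условию $B$ обладает счётной сетью, то есть $nw(B)=\om$ по определению космического пространства. Следовательно, $\nwcz(B)=\om$, откуда $w(K)\leq\om$, то есть $w(K)=\om$.

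Остаётся заметить, что выпуклый компакт веса $\om$ метризуем: компактное пространство со счётной базой метризуемо (это классический факт, или можно сослаться на то, что $w(K)=nw(K)$ для компактов и пространство со счётной сетью, являющееся компактом, метризуемо). Таким образом, $K$ метризуемо.

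Никаких технических препятствий здесь нет: всё содержательное уже сделано в теоремах \ref{t:asd:1} и \ref{t:wccs:2}. Единственное, на что стоит обратить внимание, — корректность цепочки ссылок: $nw(B)=\om$ влечёт $\nwcz(B)=\om$ (пункт (7) теоремы \ref{t:asd:1}), а это вместе с теоремой \ref{t:wccs:2} даёт $w(K)=\om$ и, значит, метризуемость компакта $K$. $\qed$
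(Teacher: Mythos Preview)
Your proof is correct and follows exactly the paper's approach: the paper derives this theorem directly from Theorem~\ref{t:asd:1}(7) and Theorem~\ref{t:wccs:2}, which is precisely your chain $nw(B)=\om \Rightarrow \nwcz(B)=\om \Rightarrow w(K)\leq\om$.
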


Из теорем \ref{t:asd:1}(6) и \ref{t:wccs:2} вытекает следующие утверждение.
\begin{theorem}\label{t:wccs:3}
Пусть $K$ выпуклый компакт и $B\subset K$ граница $K$.
Если $B$ линделефово $\Si$-пространство, то $w(K)\leq nw(B)$.
\end{theorem}

Из теорем \ref{t:asd:1}(5) и \ref{t:wccs:2} вытекает следующие утверждение.
\begin{theorem}[{И.Спурны \cite[Corollary 1.4]{Spurny2010}}]\label{t:wccs:4}
Пусть $K$ выпуклый компакт и $B\subset K$ граница $K$.
Если $B$ линделефово, то $w(K)\leq w(B)$.
\end{theorem}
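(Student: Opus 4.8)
The plan is to obtain the statement as an immediate corollary of two results already established above: Theorem~\ref{t:asd:1}(5) and Theorem~\ref{t:wccs:2}. First I would recall that Theorem~\ref{t:wccs:2} gives, for an arbitrary convex compact $K$ and an arbitrary boundary $B\subset K$, the inequality $w(K)\leq\nwcz(B)$, where $\nwcz(\cdot)$ denotes the least cardinality of a \cz-net. Thus it is enough to check that for a Lindelof space the \cz-net number is bounded by the weight, i.e.\ $\nwcz(B)\leq w(B)$.

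For this I would invoke Theorem~\ref{t:asd:1}(5): in a Lindelof space every co-zero subset is again Lindelof, so every base is automatically a \cz-net --- a co-zero set $U$ is the union of some subfamily of a given base, and by the Lindelof property of $U$ a countable subfamily already suffices --- whence $\nwcz(X)\leq w(X)$ for every Lindelof $X$. Taking $X=B$ yields $\nwcz(B)\leq w(B)$, and chaining with the previous inequality gives $w(K)\leq\nwcz(B)\leq w(B)$, which is the assertion (Spurny's Corollary~1.4).

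Hence the write-up itself is a two-line chain of inequalities; the genuine mathematical difficulty is entirely absorbed into Theorem~\ref{t:wccs:2}. That theorem is proved via Theorems~\ref{t:wccs:1+1} and~\ref{t:wccs:1} together with the bound $\hds(C_p(B))\leq\nwcz(B)$ of Theorem~\ref{t:asd:2}, and ultimately rests on the Rainwater--Simons and Pfitzner theorems (Proposition~\ref{p:defs:3}), which transfer convergent sequences and compact subsets of the unit ball $\afb K$ from the topology $\tau_p(B)$ to $\tau_p(K)$. Once these ingredients are in place the present corollary needs no new idea; in the write-up I would simply cite Theorems~\ref{t:asd:1}(5) and~\ref{t:wccs:2} and display the chain $w(K)\leq\nwcz(B)\leq w(B)$.
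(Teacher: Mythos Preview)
Your proposal is correct and matches the paper's approach exactly: the paper also derives this theorem as an immediate consequence of Theorems~\ref{t:asd:1}(5) and~\ref{t:wccs:2}, via the chain $w(K)\leq\nwcz(B)\leq w(B)$.
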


\begin{theorem}\label{t:wccs:5}
Пусть $K$ выпуклый компакт, $B\subset K$ и выпуклая оболочка $B$ плотна в $K$.
Тогда $w(K)\leq w(\bt B)$.
\end{theorem}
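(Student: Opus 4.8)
План доказательства состоит в том, чтобы свести утверждение к случаю \emph{компактной} границы, для которой уже применимы теоремы предыдущих разделов, а затем перенести оценку веса с этой границы на $\bt B$ при помощи канонической непрерывной сюръекции $\bt B\to\cl B$.

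Сначала я перешёл бы от $B$ к его замыканию $\cl B$ в $K$. Так как $K$ компактно, $\cl B$ является компактным подмножеством $K$. Выпуклая оболочка множества $\cl B$ содержит выпуклую оболочку $B$, плотную в $K$, поэтому выпуклая оболочка $\cl B$ тоже плотна в $K$. По теореме Мильмана (предложение~\ref{p:defs:4}(1)) отсюда $\ext K\subset\cl B$; поскольку $\ext K$ --- граница в $K$, а любое надмножество границы является границей, множество $\cl B$ --- граница в $K$. (Равносильно, это проверяется напрямую: для $f\in\af K$ имеем $\sup_{\cl B}f=\sup_K f$ в силу аффинности $f$ и плотности выпуклой оболочки $B$ в $K$, а этот супремум достигается, так как $f$ непрерывна, а $\cl B$ компактно.)

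Затем, применяя теорему~\ref{t:wccs:4} к компактной (а значит, линделефовой) границе $\cl B$, получаем $w(K)\leq w(\cl B)$. (Можно было бы сослаться и на теорему~\ref{t:wccs:2} вместе с теоремой~\ref{t:asd:1}(2), согласно которой $\nwcz(\cl B)=w(\cl B)$ ввиду компактности $\cl B$.) Остаётся связать $w(\cl B)$ с $w(\bt B)$. Множество $B$ плотно в компактном хаусдорфовом пространстве $\cl B$, поэтому вложение $B\hookrightarrow\cl B$ продолжается до непрерывного отображения $\bt B\to\cl B$; его образ компактен, содержит $B$ и потому совпадает с $\cl B$. Значит, $\cl B$ --- непрерывный образ $\bt B$, и, так как оба пространства компактны, $w(\cl B)=nw(\cl B)\leq nw(\bt B)=w(\bt B)$ (предложение~\ref{p:defs:1}(3),(7)). Объединяя две оценки, получаем $w(K)\leq w(\bt B)$.

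Никакого серьёзного препятствия здесь не ожидается: содержательным моментом является лишь наблюдение, что хотя само $B$ может не быть границей (максимум аффинной функции может не достигаться на $B$), его замыкание $\cl B$ всё же является компактной границей в $K$, что включает аппарат разделов~\ref{sec:wcps}--\ref{sec:wccs}; всё остальное --- рутинные манипуляции с кардинальными инвариантами, и единственное, что требует аккуратности, --- это корректность перехода от $\cl B$ к $\bt B$ через универсальное свойство стоун-чеховского расширения.
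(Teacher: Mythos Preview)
Ваше доказательство верно и по существу совпадает с доказательством в статье: вы переходите к компактной границе $C=\cl B$, применяете теорему Мильмана для проверки $\ext K\subset C$, затем теорему Спурны для оценки $w(K)\leq w(C)$, и наконец используете сюръекцию $\bt B\to C$ для получения $w(C)\leq w(\bt B)$. Разница лишь в порядке изложения и степени детализации.
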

\begin{proof}
Пусть $C=\cl B$. Тогда $w(C)\leq w(\bt B)$ и выпуклая оболочка $C$ плотна в $K$. Из теоремы Мильмана (предложение \ref{p:defs:4}) вытекает $\ext K\subset C$. Следовательно, $C$ граница $K$. Из теоремы Спурны (теорема \ref{t:wccs:4})  вытекает, что $w(K)\leq w(C)\leq w(\bt B)$.
\end{proof}

Из теоремы \ref{t:wccs:5}, теоремы Комфорда--Хагера (предложение \ref{p:asd:ch}), теоремы \ref{t:asd:3} и
следствий \ref{c:asd:1} и \ref{c:asd:2} вытекает следующие утверждение.
\begin{theorem}\label{t:wccs:6}
Пусть $K$ выпуклый компакт, $B\subset K$ и выпуклая оболочка $B$ плотна в $K$.
Тогда
\begin{enumerate}
\item
$w(B) \leq w(K)\leq w(B)^{wl(B)}$;
\item
$\nwcz(B) \leq w(K) \leq \nwcz(B)^\om$;
\item
$w(B) \leq w(K) \leq w(B)^\om$ если $B$ линделефово;
\item
$nw(B)  \leq w(K) \leq  nw(B)^\om$ если $B$ линделефово $\Si$-пространство.
\end{enumerate}
\end{theorem}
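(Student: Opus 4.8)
The plan is to read off all four double inequalities from the single estimate $w(K)\le w(\bt B)$ furnished by Theorem~\ref{t:wccs:5}, which applies here because the convex hull of $B$ is dense in $K$. The lower bounds cost almost nothing: $B$ is a subspace of the compact space $K$, so $w(B)\le w(K)$, and since $nw(K)=w(K)$ (Proposition~\ref{p:defs:1}(7)) also $nw(B)\le nw(K)=w(K)$. The estimate $\nwcz(B)\le w(K)$ in~(2) is the one lower bound that is not automatic, since $\nwcz$ is not monotone under passage to arbitrary subspaces — Theorem~\ref{t:asd:1}(3) grants monotonicity only for $z$-embedded subspaces — so here I would work through the closure $\cl B$, which by Milman's theorem (Proposition~\ref{p:defs:4}(1)) is a boundary of $K$, combining $\nwcz(\cl B)=w(\cl B)\le w(K)$ (Theorem~\ref{t:asd:1}(2)) with a transfer of a cz-network from $\cl B$ to its dense subset $B$; verifying that $B$ is $z$-embedded in $K$ (equivalently, in $\cl B$) is the delicate point, and is exactly where I expect the difficulty in the lower bounds to sit.

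For the upper bounds the key remark is that Theorem~\ref{t:wccs:5} in fact gives $w(K)\le w(\bt B)\le w(\bt B)^\om$, and $w(\bt B)^\om$ can be rewritten in terms of the cardinal functions of $B$ itself using Section~\ref{sec:asd}. Thus Theorem~\ref{t:asd:3} gives $w(\bt B)^\om=\nwcz(B)^\om$, hence $w(K)\le\nwcz(B)^\om$, which is~(2); the Comfort--Hager theorem (Proposition~\ref{p:asd:ch}) gives $w(\bt B)^\om=|C(B)|\le w(B)^{wl(B)}$, hence $w(K)\le w(B)^{wl(B)}$, which is~(1); if $B$ is Lindel\"of, Corollary~\ref{c:asd:1} gives $w(\bt B)^\om=w(B)^\om$, hence $w(K)\le w(B)^\om$, which is~(3); and if $B$ is a Lindel\"of $\Si$-space, Corollary~\ref{c:asd:2} gives $w(\bt B)^\om=nw(B)^\om$, hence $w(K)\le nw(B)^\om$, which is~(4). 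So the four upper bounds are four instances of the same two-line computation, differing only in which identity for $w(\bt B)^\om$ is invoked.

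I expect the main obstacle to be conceptual rather than computational: the method never controls $w(\bt B)$ itself, only its countable power, so the exponents $^\om$ and $^{wl(B)}$ on the right-hand sides are intrinsic and cannot be removed in general. This is already visible for $B=\om$, realised as the set of point masses $\{\de_n:n\in\om\}$ in $K=P(\bt\om)$ (whose convex hull is dense in $K$ because its closure is $\ext K$): there $w(B)=nw(B)=\om$ while $w(K)=w(\bt\om)=\cc$, so~(3) and~(4) are sharp precisely because $\om^\om=\cc$. Consequently the proof must be organised so that every estimate passes through $w(\bt B)^\om$, and the only place where genuine care is needed beyond bookkeeping is the $\nwcz$-lower bound of~(2), which has to be handled by an honest $z$-embedding argument rather than by a monotonicity that $\nwcz$ does not possess.
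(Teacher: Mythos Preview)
Your treatment of the upper bounds and of the lower bounds in (1), (3), (4) is exactly the paper's intended argument: the paper's proof is a single sentence citing Theorem~\ref{t:wccs:5}, Proposition~\ref{p:asd:ch}, Theorem~\ref{t:asd:3}, and Corollaries~\ref{c:asd:1}--\ref{c:asd:2}, and you have correctly unwound what each contributes.

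You are also right that the lower bound $\nwcz(B)\le w(K)$ in~(2) is the one place that is not automatic --- but the $z$-embedding route you sketch cannot be made to work, because the inequality is in fact false under the stated hypotheses. Take $B=\dcc$ from Example~\ref{e:eq:1}: there $\nwcz(B)=2^{\cc}$ while $w(B)=\cc$, so $B$ embeds homeomorphically into $[0,1]^{\cc}$; let $K$ be the closed convex hull of the image. Then the convex hull of $B$ is dense in $K$ and $w(K)\le\cc<2^{\cc}=\nwcz(B)$. In particular $B$ is \emph{not} $z$-embedded in $\cl B$ here (else Theorem~\ref{t:asd:1}(2),(3) would force $\nwcz(B)\le w(\cl B)\le\cc$), so the ``honest $z$-embedding argument'' you anticipate has no chance. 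The references in the paper's one-line proof do not yield this lower bound either: Theorem~\ref{t:asd:3} only gives $\nwcz(B)\le w(\bt B)$, and Theorem~\ref{t:wccs:5} gives $w(K)\le w(\bt B)$, which cannot be combined to compare $\nwcz(B)$ with $w(K)$. The left half of~(2) thus appears to be a slip in the statement rather than a gap in your reasoning.
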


%%% end file('ru//sec/wccs.tex') %%%

\section{Примеры и вопросы}
\label{sec:eq}
%%% begin file('ru//sec/eq.tex') %%%

Обозначим $\cc=2^\om$ --- мощность континуума.

\begin{example}\label{e:eq:1}
Пусть $\dc$ дискретное множество мощности континуума. Положим
\[
\dcc = \bigcup\{ \clx {\bt \dc}M: M\subset \dc,\ |M|\leq\om \} \subset \bt \dc.
\]
Пространство $\dcc$ локально компактно, счетнокомпактно, $\dcc$ плотно в $\bt\dc$  и $\dc\subset\dcc\subset\bt\dcc=\bt\dc$. 
Локально компактное пространство $\dcc$ является объединением $\cc$ компактов веса $\cc$. Следовательно, $w(\dcc)=\cc$. 

Положим $\kc=P(\bt\dc)$. Тогда $w(\kc)=w(\bt\dc)=2^\cc$ и $\ext\kc=\bt\dc$. Так как $\dcc$ счетнокомпактно и плотно в $\ext\kc$, то $\dcc$ является границей $\kc$. Получаем $w(\kc)=2^\cc>\cc=w(\dcc)$. 
Следовательно, вес выпуклого компакта и вес границы не обязательно совпадают.
Из теоремы \ref{t:wccs:2} вытекает, что $\nwcz(\dcc)=2^\cc>\cc=nw(\dcc)=w(\dcc)$.
\end{example}

\begin{problem}\label{q:eq:1} Пусть $X$ линделефово пространство.
Какие из перечисленных ниже утверждений верны?
\begin{enumerate}
\item
$X$ является непрерывным образом линделефового пространства $Y$, для которого $w(Y)\leq nw(X)$;
\item
$\nwcz(X)= nw(X)$;
\item
$\hds(C_p(X))\leq nw(X)$;
\item
$\ds(C_p(X))\leq nw(X)$;
\item
$w(X)\leq nw(X)^\om$;
\item
$w(X)\leq |X|^\om$;
\item
$w(X)\leq\cc$ если $X$ субметризуемое пространство.
\end{enumerate}
\end{problem}

Отметим, $(1)\rarr(2)\rarr(3)\rarr(4)\rarr(5)\rarr(6)\rarr(7)$ (см.\ разделы \ref{sec:asd}).
Если ответ на пункты (1), (2) или (3) в проблеме \ref{q:eq:1} положителен, то ответ на проблему  \ref{q:2} И.Спурны положителен (см.\ раздел \ref{sec:wccs}).

\begin{example}\label{e:eq:2}
Существует неметризуемый выпуклый компакт $K$ и $\ext K\subset B\subset K$, так что $B$ линделефово и $(\afb K,\tau_p(B))$ сепарабельно \cite[Example 4.5]{MoorsReznichenko2008}. Тогда $B$ линделефова субметризуемая граница неметризуемого выпуклого компакта $K$.
\end{example}

\begin{problem}\label{q:eq:2} 
Пусть $K$ выпуклый компакт и $B\subset K$ линделефова субметризуемая граница. Верно ли, что $w(K)\leq\cc$?
Что, если дополнительно предположить,  $\ext K\subset B$?
\end{problem}

Если ответ на проблему \ref{q:eq:1}(6) положителен, то ответ на проблему \ref{q:eq:3} тоже положителен.

\begin{problem}\label{q:eq:3} 
Пусть $X$ пространство.
Какие из перечисленных ниже утверждений верны?
\begin{enumerate}
\item
$\hds(C_p(X))=\nwcz(X)$;
\item
\cite[Question 4.2]{Osipov2024velichko}
если $\hds(C_p(X))\leq\om$, то $X$ со счетной сетью;
\item
\cite[Question 4.1]{Osipov2024velichko}
если $hd(C_p(X))\leq\om$, то $\hds(C_p(X))\leq\om$;
\item
если $C_p(X)$ пространство Фреше--Урысона, то $nw(X)=hl^*(X)$;
\item
если $C_p(X)$ пространство Фреше--Урысона и $X^n$ наследственно линделефово  для всех натуральных $n$, то $X$ со счетной сетью.
\end{enumerate}
\end{problem}
Напомним, $\hds(C_p(X))\leq\nwcz(X)$ (теорема \ref{t:asd:2}) и $\nwcz(X)\leq\om$ если и только если $X$ со счетной сетью (теорема \ref{t:asd:1}(7)).

Если $\hds(C_p(X))\leq\om$, то $hd(C_p(X))\leq\om$, что эквивалентно тому, что $X^n$ наследственно линделефово  для всех натуральных $n$ \cite[Теорема II.5.33]{Arhangelskii1989CpBook}. 
Не известно примера в ZFC наследственно линделефового в конечных степенях пространства, которое было бы без счетной сети \cite[Question 3.1]{Gruenhage1990opit}.

\begin{problem}\label{q:eq:4} 
Пусть $K$ выпуклый компакт и $B\subset K$ наследственно линделефовая в конечных степенях граница. Верно ли, что $K$ метризуем?
\end{problem}
Если ответ на проблему \ref{q:eq:3}(3) положителен, то ответ на проблему \ref{q:eq:4} также положителен.
Если $C_p(B)$ пространство Фреше--Урысона, то ответ на проблему \ref{q:eq:4} положителен (теорема  \ref{t:wccs:fu}).

%%% end file('ru//sec/eq.tex') %%%

\bibliographystyle{elsarticle-num}
\bibliography{ccsb}
\end{document}